\def\R {\mathbb{R}}
\def\N {\mathbb{N}}
\def\x{\boldsymbol{x}}
\def\d{{\rm d}}
\def \and {{\qquad\text{and}\qquad}}
\newtheorem{theorem}{Theorem}[section]
\newtheorem{proposition}[theorem]{Proposition}
\newtheorem{corollary}[theorem]{Corollary}
\newtheorem{definition}[theorem]{Definition}
\newtheorem{example}[theorem]{Example}
\newtheorem{remark}[theorem]{Remark}
\numberwithin{equation}{section}
\theoremstyle{definition}
\title[HUP and wave equations]
{Heisenberg Uniqueness Pairs and the wave equation}
\author{Shanlin Huang \quad Jiaqi Yu }
\address{Shanlin Huang,  School of Mathematics and Statistics, Hubei Key Laboratory of Engineering Modeling and Scientific Computing, Huazhong University of Science and Technology,  Wuhan,  430074,  P.R. China}
\email{shanlin\_huang@hust.edu.cn}
\address{Jiaqi Yu,  School of Mathematics and Statistics, Huazhong University of Science and Technology,  Wuhan,  430074,  P.R. China}
\email{jiaqi\_yu@hust.edu.cn}
\subjclass[2010]{42B10; 35A02}
\keywords{Fourier transform, Heisenberg uniqueness pair, uncertainty principle}
\begin{document}
	
	\begin{abstract}
		Given a curve $\Gamma$ and a set $\Lambda$ in the plane, the concept of the Heisenberg uniqueness pair $(\Gamma, \Lambda)$ was first introduced by Hedenmalm and Motes-Rodr\'{\i}gez (Ann. of Math. 173(2),1507-1527, 2011, \cite{HM}) as a variant of the uncertainty principle for the Fourier transform. The main results in \cite{HM} concern the hyperbola $\Gamma_{\epsilon}=\{(x_1, x_2)\in \R^2,\, x_1x_2=\epsilon\}$ ($0\ne\epsilon\in \R$) and lattice-crosses $\Lambda_{\alpha\beta}=(\alpha\mathbb{Z}\times \{0\})\cup(\{0\}\times \beta\mathbb{Z})$ ($\alpha, \beta>0$), where it's proved that $(\Gamma_{\epsilon}, \Lambda_{\alpha\beta})$ is a Heisenberg uniqueness pair if and only if $\alpha\beta\leq 1/|\epsilon|$.

In this paper, we aim to study the endpoint case (i.e., $\epsilon=0$ in $\Gamma_{\epsilon}$)  and investigate the following problem: what's the minimal amount of information required on $\Lambda$ (the zero set) to form a  Heisenberg uniqueness pair?  When $\Lambda$ is contained in the union of two curves in the plane, we give characterizations in terms of some dynamical system conditions. The situation is quite different in higher dimensions and we  obtain characterizations  in the case that $\Lambda$ is the union of two hyperplanes.
		
		
	\end{abstract}
	
	\maketitle
	

	\section{Introduction and main results}\label{section1}
	
	\subsection{Background and motivation}\label{1.1} Let $\mu$ be a finite complex-valued Borel measure on the plane $\R^2$. The Fourier transform of $\mu$ is defined by
	\begin{align}\label{equ1.1}
		\widehat {\mu} (\xi_1, \xi_2)=\int_{\R^2}e^{-\pi i(x_1\xi_1+x_2\xi_2)}\d\mu(x_1, x_2),\;\mbox{for}\; (\xi_1,\xi_2)\in \R^2.
	\end{align}
	For $\Gamma$ a finite union of smooth curves which are disjoint (except possibly for the endpoints), denote by $\mathcal{AC}(\Gamma)$ the set of finite complex-valued measures supported in $\Gamma$  and also absolutely continuous with respect to the arc length of the curve.
	In \cite{HM}, Hedenmalm and Motes-Rodr\'{\i}gez first introduce the following notion of Heisenberg uniqueness pairs (\emph{HUP}):
	\begin{definition}\label{def1}
		Let $\Gamma$ be  a finite union of smooth disjoint curves and $\Lambda\subset \R^2$. $(\Gamma ,\Lambda)$ is said to be a Heisenberg uniqueness pair if
		\begin{align}\label{equ1.2}
			\mu\in \mathcal{AC}(\Gamma)\,\, \text{and}\,\,\,\, \widehat{\mu}|_{\Lambda}=0\Longrightarrow \mu=0.
		\end{align}
	\end{definition}
	It's natural to compare it with the uncertainty principle in Fourier analysis, which roughly speaking describes the interplay between the size of $\mu$ and the size of $\hat{\mu}$. In particular, we mention the related annihilating pair of sets of positive measure and refer to \cite[Chapter 3]{HJ}.
	The notion of \emph{HUP}, on the other hand, balances the support of a measure and the zero set of $\hat{\mu}$.  More recently, motivated by \cite{HM}, as well as the interpolation formula obtained by Radchenko and Viazovska \cite{RV}, P.G. Ramos and Sousa studied the so-called Fourier uniqueness pairs in \cite{RS}.
	
	Using the fact that Fourier transform is translation and rotation invariant, it follows that for any $x_0, \xi_0\in  \R^2$,
	\begin{align}\label{equ1.2.1}
		(\Gamma, \Lambda)\,\, \mbox{is a \emph{HUP}} \Longleftrightarrow (\Gamma+x_0, \Lambda+\xi_0)\,\, \mbox{is a \emph{HUP}}, \tag{Inv-1}
	\end{align}
	and
	\begin{align}\label{equ1.2.2}
		(\Gamma, \Lambda)\,\, \mbox{is a \emph{HUP}} \Longleftrightarrow (T^{-1}(\Gamma), T^{*}(\Lambda))\,\, \mbox{is a \emph{HUP}},  \tag{Inv-2}
	\end{align}
	where $T: \R^2\rightarrow\R^2$ is an invertible linear transform and $T^{*}$ is the  adjoint operator.
	
	The main result in \cite[Theorem 1.2, Corollary 1.3]{HM} shows that when
	\begin{equation}\label{equ1.3.0}
		\Gamma_{\epsilon}=\{(x_1, x_2)\in \R^2,\, x_1x_2=\epsilon\},
	\end{equation}
	where $\epsilon\ne 0$ is real, and $\Lambda$ is the lattice cross
	\begin{equation}\label{equ1.2.0}
		\Lambda_{\alpha\beta}=(\alpha\mathbb{Z}\times \{0\})\cup(\{0\}\times \beta\mathbb{Z}),
	\end{equation}
	where $\alpha, \beta$ are positive numbers. Then $(\Gamma_{\epsilon}, \Lambda_{\alpha\beta})$ is a \emph{Heisenberg uniqueness pair} if and only if $\alpha\beta\leq \frac{1}{|\epsilon|}$.

Since the pioneering work \cite{HM}, the study of Heisenberg uniqueness pairs  has been investigated  systematically, which we briefly recall.
For $\Gamma$ being a circle, Lev \cite{Le} and Sj\"{o}lin \cite{Sj} independently proved that if $\Lambda$ is the union of certain straight lines, then it's a \emph{HUP}, see \cite{Vi} for generalizations in higher dimensions. Sj\"{o}lin \cite{Sj2} also studied the case where $\Gamma$ is a parabola and $\Lambda=\Lambda_1\cup\Lambda_1$, where  $\Lambda_1$ and $\Lambda_2$ are subsets of two different straight lines, see also in \cite{GR}. When $\Gamma$ corresponds to parallel lines or other algebraic curves,  we refer to \cite{Ba,GS,GS2} and references therein.
	It's worth mentioning that Jaming and Kellay  \cite{JK} transfer the question of whether ($\Gamma, l_1\cup l_2$) ($l_1$, $l_2$ are two distinct lines) is a \emph{HUP} to the study of a certain dynamical system on $\Gamma$ generated by two lines. Using this new technique, they are able to unify and extend the above examples.  This was further extended to higher dimensions by Gr\"{o}chenig and Jaming \cite{GJ}.

	One interesting aspect of the \emph{HUP} is that it's closely related to uniqueness properties of linear PDEs when $\Gamma$ is an algebraic curve. Indeed, let $\Gamma=\{x\in\R^n:\,P(x)=0\}$, where $P$ is a polynomial on $\R^n$. If $\mu$ is a finite measure supported on $\Gamma$, then
	\begin{equation*}
		P\left(\frac{i\partial_1}{\pi},\cdots, \frac{i\partial_n}{\pi}\right)\hat{\mu}(\xi)=\int_{\R^n}e^{-\pi i(x_1\xi_1+\cdot+x_n\xi_n)}P(x)d\mu(x)=0.
	\end{equation*}
	Using invariance property \eqref{equ1.2.2}, the  corresponding PDE of $\Gamma_{\epsilon}$ in \eqref{equ1.3.0} is the one-dimensional Klein-Gordon equation.
	From the perspective of unique continuation of PDEs, it seems natural to consider the following problem:
	\begin{center}\label{que1}
		given a  curve $\Gamma\subset\R^2$,   what's the minimum required information on $\Lambda$  to form a \emph{HUP}?
	\end{center}
	

	\noindent \textbf{Aim and Motivation} $\;$ The main goal of this paper is to investigate this problem for $\Gamma$ being the endpoint case of \eqref{equ1.3.0}, i.e,
	\begin{align}\label{equ1.3}
		\Gamma_0=\{(x_1, x_2)\in \R^2,\, x_1\cdot x_2=0\}.
	\end{align}
	The corresponding PDE expresses the one-dimensional wave equation. In this case, as is pointed out in \cite[p.1510]{HM},
	``it appears that the characterization of unique pairs $(\Gamma_0, \Lambda)$ may get quite complicated''.
	Indeed, the situation is very different from the the case $\epsilon\ne 0$ since the following necessary condition holds:
	\begin{align}\label{equ1.6}
		\pi_1(\Lambda) \,\,\,\mbox{and}\,\,\, \pi_2(\Lambda)\,\, \mbox{are dense in}\,\, \R.
	\end{align}
	Here and in what follows, we use $\pi_1(\Lambda)$ to denote the orthogonal projection of $\Lambda$ to the $\xi_1$-axis, $\pi_2(\Lambda)$ denote the orthogonal projection of $\Lambda$ to the $\xi_2$-axis.
	
	Our aim is to give characterizations of the  \emph{Heisenberg uniqueness pair} $(\Gamma_0, \Lambda)$.
	We shall mainly study the typical case that $\Lambda$ is a subset of two lines or smooth curves in the plane.

	\subsection{Characterizations for Heisenberg uniqueness pairs in $\R^2$}
	In order to state the result, we need the following definition from dynamical systems.
	\begin{definition}\label{wander}
		Let $\Phi$ be a continuous bijection from $\R\to\R$. An open set $S\subset\R$ is said to be a wandering set associate with $\Phi$, if
		\begin{align}\label{wan-def}
			\Phi^m(S)\bigcap S=\emptyset\,\,\, \text{holds for any}\,\,\, m\in\mathbb{Z}\setminus\{0\}.
		\end{align}
		Here and in what follows, we use the notation $\Phi^k=\Phi\circ\Phi\circ\dots\circ\Phi$ and $\Phi^{-k}=\Phi^{-1}\circ\Phi^{-1}\circ\dots\circ\Phi^{-1}$ for any $k=1,2,\ldots$, $\Phi(S)=\{\Phi(t):\,t\in S\}$.   In particular, if $S=\emptyset$, we say that it's wandering for any $\Phi$.
	\end{definition}
	Our definition is slightly different from \cite[p.173]{BS}, \cite{JK}, where $\Phi$ is not a bijection. It deserves to mention that in \cite{JK},  \emph{Heisenberg uniqueness pairs} were obtained with the help of wandering sets. If $S$ is non-wandering, then there exist some interval $I\subset S$ and some $m\in\mathbb{Z}\setminus\{0\}$ such that $\Phi^m(I)\subset S$.

	First we study  the following  special case:
	\begin{center}
		$\Lambda$ consists of a portion which parallels to the $\xi_1$-axis or $\xi_2$-axis.
	\end{center}

	More precisely, we have
	\begin{theorem}\label{thm-1}
		Let $\Gamma_0$ be given by \eqref{equ1.3}. Assume that  either
		\begin{align}\label{equ1.7}
				\Lambda_1\subset \R\times \{a\},\quad \Lambda_2\subset \{b\}\times \R,
		\end{align}
		or
		\begin{align}\label{equ1.8}
				\Lambda_1\subset \R\times \{a\},\quad\Lambda_2\subset \left\{(\xi, T\xi),\,\,\xi\in\R\right\},
		\end{align}
		where $T:\,\,\R\to\R$ is a continuous bijection and $a, b\in \R$ are arbitrarily fixed.
		Then the following  statements are equivalent:
		
		\noindent $(i)$   $(\Gamma_0 ,\Lambda)$ is a Heisenberg uniqueness pair.

		\noindent $(ii)$   $\pi_1(\Lambda_1)$ and $\pi_2(\Lambda_2)$ are dense in $\R$.
	\end{theorem}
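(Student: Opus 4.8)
The plan is to read everything off the decomposition of $\mu$. Since the arc-length measure on $\Gamma_0$ coincides, off a null set, with Lebesgue measure on each coordinate axis, every $\mu\in\mathcal{AC}(\Gamma_0)$ is of the form $d\mu=f_1(x_1)\,dx_1$ on $\{x_2=0\}$ together with $f_2(x_2)\,dx_2$ on $\{x_1=0\}$, with $f_1,f_2\in L^1(\R)$, and \eqref{equ1.1} yields $\widehat\mu(\xi_1,\xi_2)=\widehat{f_1}(\xi_1)+\widehat{f_2}(\xi_2)$, where $\widehat{f_j}$ is the one-dimensional Fourier transform of $f_j$; in particular each $\widehat{f_j}$ is continuous and vanishes at infinity, and $\mu=0$ iff $f_1=f_2=0$. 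So $\widehat\mu|_{\Lambda}=0$ amounts to one scalar identity on $\Lambda_1$ and one on $\Lambda_2$.

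For $(ii)\Rightarrow(i)$, suppose $\widehat\mu|_{\Lambda}=0$. On $\Lambda_1\subset\R\times\{a\}$ this reads $\widehat{f_1}(\xi)=-\widehat{f_2}(a)$ for every $\xi\in\pi_1(\Lambda_1)$; since $\widehat{f_1}$ is continuous and $\pi_1(\Lambda_1)$ is dense, $\widehat{f_1}$ is constant, hence $\equiv 0$ because it vanishes at infinity, so $f_1=0$ (injectivity of the Fourier transform on $L^1$) and $\widehat{f_2}(a)=0$. Inserting $f_1=0$ into the identity on $\Lambda_2$ gives $\widehat{f_2}=0$ on $\pi_2(\Lambda_2)$ --- directly in case \eqref{equ1.7}, and in case \eqref{equ1.8} because $\widehat{f_2}(T\xi)=-\widehat{f_1}(\xi)=0$ for $\xi\in\pi_1(\Lambda_2)$ while $\pi_2(\Lambda_2)=T(\pi_1(\Lambda_2))$. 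Density of $\pi_2(\Lambda_2)$ and continuity then force $\widehat{f_2}\equiv 0$, i.e.\ $f_2=0$, so $\mu=0$.

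For $(i)\Rightarrow(ii)$ I would begin from the necessary condition \eqref{equ1.6}: if $(\Gamma_0,\Lambda)$ is a \emph{HUP} then $\pi_2(\Lambda)=\{a\}\cup\pi_2(\Lambda_2)$ and $\pi_1(\Lambda)=\pi_1(\Lambda_1)\cup\pi_1(\Lambda_2)$ are dense, whence $\pi_2(\Lambda_2)$ is dense, and --- in case \eqref{equ1.7}, where $\pi_1(\Lambda_2)=\{b\}$ --- also $\pi_1(\Lambda_1)$ is dense; thus $(ii)$ is immediate in case \eqref{equ1.7}. In case \eqref{equ1.8}, density of $\pi_2(\Lambda_2)$ already makes $\pi_1(\Lambda_2)=T^{-1}(\pi_2(\Lambda_2))$ dense, so \eqref{equ1.6} says nothing about $\pi_1(\Lambda_1)$, and one must prove separately: if $\pi_1(\Lambda_1)$ is \emph{not} dense, then $(\Gamma_0,\Lambda)$ is not a \emph{HUP}. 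I would do this by constructing a nonzero $\mu$ with $\widehat\mu|_{\Lambda}=0$. First note that, $\pi_1(\Lambda_2)$ being dense, the identity on $\Lambda_2$ and continuity force the rigid coupling $\widehat{f_1}=-\widehat{f_2}\circ T$ on all of $\R$, so no single-axis measure can work. Concretely: choose an open interval $I$ disjoint from $\overline{\pi_1(\Lambda_1)}\cup\{T^{-1}(a)\}$, pick $f_1\in L^1(\R)$ with $\widehat{f_1}$ smooth, supported in $I$ and not identically zero, and --- the key requirement --- so that $\widehat{f_1}\circ T^{-1}$ is again the Fourier transform of some $g\in L^1(\R)$; then set $f_2:=-g$ and $\mu:=f_1(x_1)\,dx_1+f_2(x_2)\,dx_2$. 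Then $\mu\neq 0$, and $\widehat\mu(\xi,a)=\widehat{f_1}(\xi)-\widehat{f_1}(T^{-1}(a))=0$ for $\xi\in\pi_1(\Lambda_1)$ (both arguments lie outside $\operatorname{supp}\widehat{f_1}$), while $\widehat\mu(\xi,T\xi)=\widehat{f_1}(\xi)-(\widehat{f_1}\circ T^{-1})(T\xi)=0$ on $\Lambda_2$; hence $\widehat\mu|_{\Lambda}=0$.

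The only substantive point is the emphasized step: choosing the bump $\widehat{f_1}$ on $I$ so that $\widehat{f_1}\circ T^{-1}$ still has an integrable inverse Fourier transform. If $T$ is a smooth diffeomorphism this is free, since then $\widehat{f_1}\circ T^{-1}\in C_c^\infty(\R)$ whenever $\widehat{f_1}\in C_c^\infty(I)$. For a general continuous bijection it is delicate, because $\widehat{f_1}$ and $\widehat{f_1}\circ T^{-1}$ play symmetric roles (regularizing one side de-regularizes the other, the composition only ``moving'' smoothness through $T$), so the decisive input is a monotone change of variables $\eta=T(\xi)$ in the oscillatory integral $\int\widehat{f_1}(T^{-1}(\eta))\,e^{\pi i x\eta}\,d\eta$ showing it lies in $L^1_x$ for a suitably adapted choice of $\widehat{f_1}$. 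That estimate is where I expect the real work of Theorem~\ref{thm-1} to lie.
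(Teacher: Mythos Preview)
Your argument follows the paper's exactly: the decomposition $\widehat\mu(\xi_1,\xi_2)=\widehat{f_1}(\xi_1)+\widehat{f_2}(\xi_2)$ with $\widehat{f_j}\in C_0(\R)$, the density argument for $(ii)\Rightarrow(i)$, and for $(i)\Rightarrow(ii)$ in case \eqref{equ1.8} the very counterexample you describe --- the paper picks $h\in C_0^\infty(I_2)$ with $0\notin I_2$, sets $s:=-h\circ T^{-1}$, and defines $f:=\mathcal{F}^{-1}h$, $g:=\mathcal{F}^{-1}s$. Your appeal to the necessary condition \eqref{equ1.6} to dispatch case \eqref{equ1.7} and the $\pi_2(\Lambda_2)$ half of case \eqref{equ1.8} is a mild streamlining of the paper's direct single-axis constructions.

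On the step you flag as the ``real work'' --- ensuring that $g=\mathcal{F}^{-1}(-h\circ T^{-1})$ lies in $L^1(\R)$ for a merely continuous bijection $T$ --- the paper does \emph{not} address this at all: it writes $g:=\mathcal{F}^{-1}(s)$ and proceeds. When $T$ is a smooth diffeomorphism this is harmless, since then $s\in C_0^\infty(\R)$; for a general homeomorphism your concern is legitimate, and the same issue recurs verbatim in the counterexample constructions for Theorem~\ref{thm-2}. So you have correctly anticipated a genuine subtlety, but it is not where the paper invests its effort; the paper treats the step as immediate, and no oscillatory-integral estimate of the kind you envisage appears anywhere in the argument.
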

	
	Next, we consider the following general situation:
	\begin{align}\label{equ1.9}
		\Lambda=\Lambda_1\cup\Lambda_2,\,\,\,\, \Lambda_i\subset \{(\xi, T_i\xi),\,\,\xi\in\R\},  \,\,\,\,\, i=1,2,
	\end{align}
	where $T_1$ and $T_2$ are continuous bijection from $\R\to\R$.

	\begin{theorem}\label{thm-2}
		Let $\Gamma_0$ and $\Lambda$  be given in \eqref{equ1.3} and \eqref{equ1.9} respectively.
		
		\noindent $(\mathbf{A})$ Assume that  there exists an open interval $I\subset\R$ such that
		\begin{align}\label{equ1.10}
			\Phi^2(\xi)=\xi,\,\,\mbox{for all}\,\, \xi\in I,\,\,\,\, \mbox{where} \,\,\,\Phi=T_2^{-1}\circ T_1.
		\end{align}
		
		Then $(\Gamma_0,\Lambda)$ is \textbf{not} a Heisenberg uniqueness pair.
		
		\noindent $(\mathbf{B})$ Assume that  \eqref{equ1.10} doesn't hold for any open interval $I\subset\R$. Then the following  statements are equivalent:

		\noindent $(i)$   $(\Gamma_0 ,\Lambda)$ is a Heisenberg uniqueness pair.

		\noindent $(ii)$ $\pi_1(\Lambda)$ is dense in $\R$ and $\R\setminus(\overline{\pi_1(\Lambda_1)}\cap\overline{\pi_1(\Lambda_2)})$ is a wandering set associate with $\Phi=T_2^{-1}\circ T_1$.
	\end{theorem}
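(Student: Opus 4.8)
The plan is to reduce the problem to one dimension by d'Alembert's formula. Since $\Gamma_0$ is the union of the two coordinate axes, every $\mu\in\mathcal{AC}(\Gamma_0)$ is of the form $\d\mu=f_1(x_1)\,\d x_1$ on $\{x_2=0\}$ together with $\d\mu=f_2(x_2)\,\d x_2$ on $\{x_1=0\}$ with $f_1,f_2\in L^1(\R)$, so that $\widehat\mu(\xi_1,\xi_2)=F(\xi_1)+G(\xi_2)$ with $F=\widehat{f_1}$, $G=\widehat{f_2}$ in $\widehat{L^1}\subset C_0(\R)$. Writing $E_i:=\pi_1(\Lambda_i)$, the hypothesis $\widehat\mu|_\Lambda=0$ becomes
\begin{equation*}
F(\xi)=-G(T_1\xi)\ \ (\xi\in E_1),\qquad F(\xi)=-G(T_2\xi)\ \ (\xi\in E_2),
\end{equation*}
and these extend to $\overline{E_1}$, $\overline{E_2}$ by continuity of $F,G,T_1,T_2$. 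Thus $(\Gamma_0,\Lambda)$ fails to be a HUP precisely when this system has a nontrivial solution $(F,G)\in\widehat{L^1}\times\widehat{L^1}$. Note that if $\pi_1(\Lambda)$ is dense then $\overline{E_1}\cup\overline{E_2}=\R$ and $F$ is completely determined by $G$; subtracting the two equations on $\overline{E_1}\cap\overline{E_2}$ and using $T_1=T_2\circ\Phi$ gives the one–variable compatibility $Q\circ\Phi=Q$ on $\overline{E_1}\cap\overline{E_2}$, where $Q:=G\circ T_2$. Iterating this along $\Phi$-orbits that avoid the open set $W:=\R\setminus(\overline{E_1}\cap\overline{E_2})$ is what couples the statement to wandering sets.

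For the implication $(ii)\Rightarrow(i)$ in $(\mathbf B)$, I would argue as follows. By density, $F=-Q$ on $\overline{E_2}$ and $F=-Q\circ\Phi$ on $\overline{E_1}$, globally. Since $W$ is wandering, the translates $\Phi^m(W)$ are pairwise disjoint, so each $\Phi$-orbit meets $W$ in at most one point; hence the compatibility relation propagates through almost the whole orbit and $F$ is constant along each $\Phi$-orbit, with at most one exceptional link. Then one invokes the structure of the monotone homeomorphism $\Phi$: on each complementary interval of $\mathrm{Fix}(\Phi)$ (or of $\mathrm{Fix}(\Phi^2)$ if $\Phi$ is orientation–reversing) the orbit converges monotonically to the two endpoints, or else escapes to $\pm\infty$. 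Using $F\in C_0(\R)$ at the escaping ends forces the orbit–constant to be $0$ there, and the assumption that $\Phi^2\ne\mathrm{id}$ on any interval makes $\mathrm{Fix}(\Phi^2)$ nowhere dense, so a continuity/limiting argument propagates $F\equiv 0$ across the fixed–point set; whence $G\equiv0$ and $\mu=0$.

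For $\neg(ii)\Rightarrow\neg(i)$ and for $(\mathbf A)$ I would produce explicit counterexamples. If $\pi_1(\Lambda)$ is not dense the necessary condition \eqref{equ1.6} already fails. If $\pi_1(\Lambda)$ is dense but $W$ is \emph{not} wandering, there are an open interval $I\subset W$ and $m\in\mathbb{Z}\setminus\{0\}$ with $\Phi^m(I)\subset W$; on $W$ the compatibility imposes no condition, so one may prescribe $Q$ freely on $I\cup\Phi^m(I)$ and extend $\Phi$-invariantly, yielding a nonzero $(F,G)$. Part $(\mathbf A)$ is the same mechanism with the "trapped" region supplied by the period–$2$ interval: since $\Phi^2=\mathrm{id}$ on $I$ the map $\Phi$ is a continuous involution on $I\cup\Phi(I)$, which is exactly the room needed to make both functional equations compatible with nontrivial data concentrated there. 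In each construction the one genuinely delicate point is to arrange that $f_1=\check F$ and $f_2=\check G$ are honestly integrable, which I would handle by choosing the free data to be $C^2$ with compact support (so that it lies in $\widehat{L^1}$) and performing a change of variables adapted to the graphs of $T_1,T_2$.

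The hard part is the passage, in $(\mathbf B)$, from "$F$ is constant along $\Phi$-orbits up to one point" to "$F\equiv 0$": one must control the dynamics of $\Phi$ near $\mathrm{Fix}(\Phi^2)$, which is nowhere dense but may be uncountable, and propagate the vanishing obtained at the orbit–ends that escape to infinity across this set using only continuity and $C_0$-decay — this is precisely where the no–period–$2$–interval hypothesis is used and where a naive argument would fail (a continuous $C_0$ function constant on complementary intervals of a Cantor set of fixed points need not vanish). Ensuring the various non–HUP constructions are realized by genuinely $L^1$ densities, rather than by merely continuous ones, is a secondary recurring technicality.
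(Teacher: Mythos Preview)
Your outline follows essentially the same architecture as the paper: reduce to the system $F+G\circ T_i=0$ on $\overline{\pi_1(\Lambda_i)}$, propagate $F$ along $\Phi$-orbits using the wandering hypothesis, and then exploit the limit behaviour of $\Phi^{2n}$ together with $F\in C_0$. The paper organizes the forward direction of $(\mathbf B)$ into three explicit claims: (1) $J:=\mathrm{Fix}(\Phi^2)$ is nowhere dense and $\Phi^{2n}(\xi)\to J\cup\{\pm\infty\}$; (2) for $\xi$ in the open set $E:=\R\setminus(\overline{\pi_1(\Lambda_1)}\cap\overline{\pi_1(\Lambda_2)})$, the wandering property forces $\Phi^k(E_j)\subset\overline{\pi_1(\Lambda_1)}\cap\overline{\pi_1(\Lambda_2)}$ for all $k\neq 0$, which is what makes the full iteration $\hat f(\xi)=\hat f(\Phi^{-k}\xi)$ legitimate and yields $\hat f(E)\subset\hat f(J)\cup\{0\}$; (3) for $\xi\in E^c$ one iterates forward until the orbit first enters $E$ (or never does), reducing to the previous case. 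The conclusion is then the single line $\hat f(\R)\subset\hat f(J)\cup\{0\}$, $J$ nowhere dense, $\hat f\in C_0$, hence $\hat f\equiv 0$.

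Two places where your sketch is thinner than it needs to be. First, your statement ``each $\Phi$-orbit meets $W$ in at most one point, hence $F$ is constant along the orbit with at most one broken link'' is morally right but not quite the mechanism: the link $F(\xi)=F(\Phi^{-1}\xi)$ requires \emph{two} memberships ($\xi\in\overline{E_2}$ and $\Phi^{-1}\xi\in\overline{E_1}$), so one must show, as the paper does in \eqref{equ2.29}--\eqref{equ2.30}, that all nonzero iterates $\Phi^k(E_j)$ land inside both closures. Second, in the non-HUP constructions you say ``prescribe $Q$ freely on $I\cup\Phi^m(I)$ and extend $\Phi$-invariantly'': this needs the intermediate sets $\Phi^j(I)$, $0\le j\le m$, to be pairwise disjoint, which is not automatic --- the paper shrinks $I$ (via \eqref{equ2.48} and Claim~4) to secure this, and then splits into four cases according to which of $(\overline{\pi_1(\Lambda_1)})^c$, $(\overline{\pi_1(\Lambda_2)})^c$ contain $I$ and $\Phi^m(I)$, since the boundary conditions on $h$ and $s$ differ in each case. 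Your Part~$(\mathbf A)$ likewise needs the increasing/decreasing dichotomy for $\Phi$: when $\Phi$ is increasing, $\Phi^2=\mathrm{id}$ on $I$ forces $\Phi=\mathrm{id}$ there and one works on $I$ alone; when $\Phi$ is decreasing one must pass to a subinterval $\tilde I$ avoiding the unique fixed point so that $\tilde I\cap\Phi(\tilde I)=\emptyset$, and build the counterexample on $\tilde I\cup\Phi(\tilde I)$.

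Finally, the concern you flag at the end is exactly the right one, and you have not closed it. The paper's final step is the bare assertion that $\hat f(\R)\subset\hat f(J)\cup\{0\}$ with $J$ nowhere dense and $\hat f\in C_0$ forces $\hat f=0$; your Cantor-staircase intuition shows this implication is false for arbitrary $C_0$ functions locally constant off a nowhere dense set, so if you want a complete proof you must use more than what you (and the paper) have written down --- for instance, that $J\subset E^c$ (since $J\cap E\neq\emptyset$ would contradict wandering via $\Phi^2|_J=\mathrm{id}$), that $\Phi$ permutes $J$, and that the orbit relations tie together values of $\hat f$ on $J$ through both endpoints of every complementary interval. You have correctly located the gap; what remains is to actually bridge it.
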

	We make the following remarks related to Theorem \ref{thm-1} and \ref{thm-2}:
	\begin{itemize}
		\item [($\textbf{a}_1$)] As far as we are aware, Theorem \ref{thm-1} and \ref{thm-2} are the first work to characterize the \emph{Heisenberg uniqueness pair} for the one dimensional wave equation. Besides the necessary condition \eqref{equ1.6}, the following sufficient condition was given in \cite{GJ,JK}: If $\Lambda=l_{\theta_1}\cup l_{\theta_2}$ and $\theta_1+\theta_2\ne \pi$, where $l_{\theta}=\{(t\cos\theta, t\sin\theta):\,t\in\R\}$, $0\leq \theta<\pi$, then $(\Gamma ,\Lambda)$ is a \emph{HUP}. It's direct to check that this is a special case of Theorem \ref{thm-2}-$(\mathbf{B})$, further, when $\theta_1+\theta_2=\pi$, we have $\Phi^2=I$, the identity operator, thus it's not a \emph{HUP} by Theorem \ref{thm-2}-$(\mathbf{A})$.

		\item [($\textbf{a}_2$)] In view of Theorem  \ref{thm-1} and the above result for  the Klein-Gordan equation  ($(\Gamma_{\epsilon}, \Lambda_{\alpha\beta})$ given by \eqref{equ1.3.0} and \eqref{equ1.2.0}), we see that when $\epsilon=0$, the situation for the wave equation is completely different. Moreover, in this case, the necessary condition \eqref{equ1.6} is also sufficient. On the other hand, Theorem  \ref{thm-2} shows that this is far from sufficient in general. In addition to the density assumption, it is essential that $\R\setminus(\overline{\pi_1(\Lambda_1)}\cap\overline{\pi_1(\Lambda_2)})$ satisfies the dynamical system condition associated with the map $\Phi$ (generated by $\Lambda_1$ and $\Lambda_2$). In Example \ref{ex-1}-\ref{ex-2}, we present concrete examples to illustrate this.
		\item [($\textbf{a}_3$)] The proof of  Theorem  \ref{thm-2} starts by observing that the assumption $\widehat{\mu}|_{\Lambda}=0$ is equivalent to a system of equations with two unknowns, see  \eqref{equ2.12.0}.  Then the key ingredient  is to use the wandering property of $\left(\overline{\pi_1(\Lambda_1)}\cap\overline{\pi_1(\Lambda_2)}\right)^c$ to obtain an iterative scheme.  Based on this iteration and the assumption $\mu\in\mathcal{AC}(\Gamma)$, we can solve the equations  \eqref{equ2.12.0} uniquely.

	\end{itemize}
	
	\subsection{Heisenberg uniqueness pairs in higher dimensions}
	There is no difficulty to extend the  definition of Heisenberg uniqueness pairs to higher dimensions. However, the situation is very different.
	Indeed, let $Q$ be the quadratic form
	\begin{equation*}
		Q(x)=x_1^2+x_2^2+\cdots+x_{n-1}^2-x_n^2
	\end{equation*}
	with the associated bilinear form $B:\, \R^n\times\R^n\rightarrow\R$ such that $Q(x)=B(x,x)$. Consider the cone
	\begin{align}\label{equ3.8}
		S=\{x\in \R^n,\, Q(x)=0\},
	\end{align}
	as well as the hyperplane
	\begin{align}\label{equ3.9}
		H_u=\{x\in\R^n:\,\langle x,u\rangle=0\},\quad |u|=1.
	\end{align}
	When $n=2$, it follows from Theorem \ref{thm-1} and Theorem \ref{thm-2}-$(\mathbf{A})$ that $(S, H_u)$  is not a Heisenberg uniqueness pair for any $u\in \mathbb{S}^1$.
	However, when $n>2$, it was proved in \cite[Corollary 2.3]{GJ} that $(S, H_u)$ is a Heisenberg uniqueness pair if and only if $Q(u)=0$.
	
	In the following we consider the union $H_{u_1}\cup H_{u_2}$ such that $Q(u_i)\ne 0$, $i=1, 2$.
	In order to state the result, we fix $u_1, u_2\in\mathbb{S}^{n-1}$ satisfying $u_1\neq \pm u_2$, then there exists a unique pair ($v_1, v_2$) (see Figure \ref{fig5}), such that $v_2\in \mathbb{S}^{n-1}\cap\{u_1\}^{\perp}$,  $v_1\in \mathbb{S}^{n-1}\cap\{u_2\}^{\perp}$ and
	\begin{equation}\label{equ1.20}
		u_2=\cos\theta_0u_1+\sin\theta_0v_2, \quad\,  v_1=\cos\theta_0v_2-\sin\theta_0u_1, \quad \theta_0\in(0,\pi).
	\end{equation}
	\begin{figure}\label{fig5}
		\begin{tikzpicture}[scale=0.90]
			\draw[->](0,0)--(2.3,0) node[right]{$u_1$};
			\draw[->](0,0)--(0,2.3) node[above]{$v_2$};
			\draw[->](0,0)--(2,1) node[above]{$u_2$};
			\draw[->](0,0)--(-1,2) node[above]{$v_1$} ;
			\coordinate (o) at (0,0) node[above,outer sep=-2.5pt,font=\small]at(0.9,0){$\theta_0$};
			\draw[thin,dotted](0,0)--(0,1.0) node[left,color=black,outer sep=-2.5pt,font=\small]{$\theta_0$};
			\coordinate (a) at (27:3);
			\coordinate (b) at (3:0);
			\coordinate (c) at (0:5);
			\coordinate (d) at (117:3);
			\draw[thin,dotted](2,1)--(2,0);
			\draw[thin,dotted](-1,2)--(0,2);
		\end{tikzpicture}
		\caption{$v_1,v_2$}\label{fig5}
	\end{figure}
	Further,  we set
	\begin{equation}\label{equ3.8.1}
		A:=Q(u_1),\,B:=B(u_1,v_2),\,C:=Q(v_2),\,D:=B(x,u_1),\,E:=B(x,v_2).
	\end{equation}
	
	%
	\begin{theorem}\label{thm-3}
		Let $S$ and $H_u$ be given in \eqref{equ3.8} and \eqref{equ3.9} respectively.
		Suppose $u_1,u_2\in \mathbb{S}^{n-1}$ are different unit vectors  satisfying $Q(u_1),\,Q(u_2)\neq 0$,  Let $A,\,B,\,C,\,\nu_1,\,\nu_2,\,\theta_0$ be  given by \eqref{equ3.8.1} and \eqref{equ1.20} respectively.
		
		\noindent $(i)$  	when $AC-B^2=0$, $(S\,,\,H_{u_1}\cup H_{u_2})$ is a Heisenberg uniqueness pair;
		
		\noindent $(ii)$  	when $AC-B^2<0$, $(S\,,\,H_{u_1}\cup H_{u_2})$ is a Heisenberg uniqueness pair if and only if
		\begin{equation}\label{equ1.12}
			B(v_1, v_2)\neq 0.
		\end{equation}
		
		\noindent $(iii)$  	when $AC-B^2>0$, $(S\,,\,H_{u_1}\cup H_{u_2})$ is a Heisenberg uniqueness pair if and only if
		\begin{equation}\label{equ1.13}
			\arctan\left(\frac{b}{a}\tan(\theta_0-\varphi)\right)+\arctan\left(\frac{b}{a}\tan\varphi\right)
			\notin\pi\mathbb{Q},
		\end{equation}
		where  $\frac{b}{a}=\sqrt{\frac{A+2B\tan\varphi+C\tan^2\varphi}{C-2B\tan\varphi+A\tan^2\varphi}}$ and  $\tan2\varphi=\frac{2B}{A-C}$. 
	\end{theorem}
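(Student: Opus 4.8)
The plan is to reduce the $n$-dimensional problem to a family of planar ones, indexed by the directions transverse to the plane $P:=\mathrm{span}\{u_1,u_2\}=\mathrm{span}\{u_1,v_2\}$, and then to invoke, case by case, the planar results of this paper (the Lorentzian case), a direct dispersive uniqueness argument (the degenerate case), and the circle theory of Lev and Sj\"olin together with an equidistribution dichotomy (the definite case).

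First I would note that $W:=\widehat\mu$ is bounded, continuous, vanishes at infinity, and (up to normalization) satisfies the wave equation $Q(\partial)W=0$ on $\R^n$, because $\mu$ is supported on $S=\{Q=0\}$. Since $u_1,u_2\in P$, one has $P^\perp\subset H_{u_1}\cap H_{u_2}$, so each hyperplane splits as $H_{u_i}=\ell_i\oplus P^\perp$ with $\ell_i:=H_{u_i}\cap P$ a line through $0$ in $P$; by \eqref{equ1.20} the lines $\ell_1,\ell_2$ carry the directions $v_2,v_1$ and meet at angle $\theta_0$. Taking the partial Fourier transform of $W$ in the $P^\perp$-variables yields, for a.e.\ transverse frequency $\zeta$, a function $w_\zeta\in C_0(P)$ which, after removing a first-order term by a gauge substitution $w_\zeta=e^{i\langle c(\zeta),\cdot\rangle}\widetilde w_\zeta$ that does not move the vanishing set, satisfies $q(\partial)\widetilde w_\zeta+\lambda(\zeta)\widetilde w_\zeta=0$ on $P\cong\R^2$ and vanishes on $\ell_1\cup\ell_2$; here $q:=Q|_P$ has matrix $\left(\begin{smallmatrix}A&B\\ B&C\end{smallmatrix}\right)$ in the basis $\{u_1,v_2\}$, $\lambda$ is quadratic in $\zeta$, and — since $A=Q(u_1)\neq0$ — the trichotomy $AC-B^2=0,<0,>0$ is precisely ``$q$ of rank one / $q$ Lorentzian / $q$ positive definite''. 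As $W\equiv0$ iff $w_\zeta\equiv0$ for a.e.\ $\zeta$ (hence $\mu=0$), the pair $(S,H_{u_1}\cup H_{u_2})$ is a \emph{HUP} iff this planar problem has only the trivial solution for a.e.\ $\zeta$; a routine argument in the spirit of \cite{HM,GJ} identifies $\widetilde w_\zeta$ with an element of the range of the associated planar transform, so these are bona fide \emph{HUP} questions.

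For case (i), a linear change of coordinates on $P$ makes the principal part of the equation an ODE operator in a single variable, and then $\widetilde w_\zeta\in C_0$ forces $\widetilde w_\zeta\equiv0$, either immediately (when the transverse first-order coefficient vanishes) or via the corresponding one-dimensional dispersive uniqueness; hence $(S,H_{u_1}\cup H_{u_2})$ is always a \emph{HUP}. For case (ii), at a generic $\zeta$ (where $\lambda(\zeta)\ne0$) the planar equation is a Klein--Gordon equation, so we are in the situation of \cite{HM} with the lattice cross replaced by two lines, while on the measure-zero set $\{\lambda=0\}$ it is the wave equation of Theorems~\ref{thm-1}--\ref{thm-2}. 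Passing to characteristic coordinates for $q$, the interval map induced by $\ell_1,\ell_2$ is an involution with a fixed interval exactly when $\ell_1,\ell_2$ are symmetric about a null line of $q$, which a short computation with the bilinear form shows is equivalent to $B(v_1,v_2)=0$; Theorem~\ref{thm-2}--$(\mathbf{A})$ then yields the failure of \emph{HUP} when $B(v_1,v_2)=0$, while if $B(v_1,v_2)\ne0$ the density hypothesis is automatic for a two-line set and Theorem~\ref{thm-2}--$(\mathbf{B})$ (with its Klein--Gordon analogue) gives uniqueness for a.e.\ $\zeta$. For case (iii), I would diagonalize $q$ by the rotation through $\varphi$ with $\tan2\varphi=2B/(A-C)$ and then rescale the axes by $b/a$; this turns $q(\partial)+\lambda(\zeta)$ into a Helmholtz-type operator and sends $\ell_1,\ell_2$ to lines through the origin at angles $\arctan\!\left(\tfrac ba\tan(\theta_0-\varphi)\right)$ and $-\arctan\!\left(\tfrac ba\tan\varphi\right)$; as in the circle case of \cite{Le,Sj} the problem becomes the rotation of a circle by the difference of these angles, which is uniquely ergodic — and so forces $\widetilde w_\zeta\equiv0$ — exactly when that difference avoids $\pi\mathbb{Q}$, and has periodic orbits (producing a nonzero solution) otherwise; this is condition \eqref{equ1.13}.

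The main obstacle is the negative directions, where one must exhibit a genuine nonzero $\mu\in\mathcal{AC}(S)$ rather than merely fail to prove uniqueness: this means prescribing $\widetilde w_\zeta$ adapted to a wandering interval (Lorentzian case with $B(v_1,v_2)=0$) or to a periodic orbit (definite case with rational rotation), reassembling the $w_\zeta$, inverting the partial Fourier transform, and verifying — with decay and smoothness estimates uniform in $\zeta$ — that the result is the Fourier transform of an arclength-absolutely-continuous measure on the cone. A secondary difficulty is the bookkeeping with the bilinear data \eqref{equ3.8.1} needed to track the affine normalization of the ellipse $\{q=\mathrm{const}\}$ and thereby recover the exact angle appearing in \eqref{equ1.13}.
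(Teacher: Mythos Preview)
Your reduction to planar problems via slicing along $P^\perp$ is essentially the disintegration argument of \cite{GJ} that the paper also uses (its \eqref{equ3.13}--\eqref{equ3.14}); the partial-Fourier phrasing is equivalent.  However, the case analysis on the planar side contains a genuine gap.

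In case $(ii)$ the generic slice $S_x$ is a \emph{non-degenerate hyperbola}, not the cross $\Gamma_0$.  Theorems~\ref{thm-1}--\ref{thm-2} of this paper are proved only for $\Gamma_0=\{x_1x_2=0\}$; they say nothing about $\{x_1x_2=c\}$ with $c\neq0$, and the map $\Phi=T_2^{-1}\circ T_1$ there is built from the \emph{graphs} $\{(\xi,T_i\xi)\}$ comprising $\Lambda$, not from an interval map induced by two lines on a hyperbola.  So invoking Theorem~\ref{thm-2}--$(\mathbf{A})$/$(\mathbf{B})$ for the failure/success of HUP when $B(v_1,v_2)=0$ or $\neq0$ is simply the wrong citation.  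The paper instead appeals to the hyperbola result of Jaming--Kellay (its Theorem~\ref{thm3.5}), whose criterion $\tan\theta_1\tan\theta_2\neq a_0^2/b_0^2$ must then be converted into $B(v_1,v_2)\neq0$; this conversion is not a ``short computation'' but the bulk of the paper's work in Case~2 (the three Subcases, with the rotation~\eqref{equ3.17}--\eqref{equ3.18} and the algebra~\eqref{equ3.28}--\eqref{equ3.31.1}).  Your geometric heuristic ``symmetric about a null line'' points at the right phenomenon but does not replace that computation, and the measure-zero slice $\{\lambda=0\}$ on which Theorem~\ref{thm-2} \emph{would} apply cannot by itself produce a nonzero $\mu$ on the full cone.

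In case $(i)$ your gauge $w_\zeta=e^{i\langle c(\zeta),\cdot\rangle}\tilde w_\zeta$ is supposed to remove the first-order part, but solving for $c$ requires inverting the matrix of $q$, which is singular precisely when $AC-B^2=0$.  The residual first-order term in the transverse direction is exactly what makes $S_x$ a genuine parabola rather than a pair of lines, and your ``ODE in one variable'' picture breaks down; the appeal to an unspecified ``one-dimensional dispersive uniqueness'' is not enough.  The paper avoids this by citing Sj\"olin's parabola theorem \cite{Sj2} directly.  Case $(iii)$ is in line with the paper's use of the Lev--Sj\"olin/\cite{JK} ellipse result, though here too the passage from $(u_1,v_2,\theta_0)$ to the exact ratio $b/a$ in \eqref{equ1.13} needs the explicit diagonalization \eqref{equ3.19.1} rather than a sketch.
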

	
	Some remarks related to Theorem \ref{thm-3} are as follows:
	\begin{itemize}
		\item [($\textbf{b}_1$)] Theorem \ref{thm-3} can be viewed as a complement of the result in \cite[Theorem 1.2]{GJ}, where it was proved that for general quadratic hypersurface $S$, there exists an exceptional set  $\mathcal{E}$ of measure zero in $\mathbb{S}^{n-1}\times \mathbb{S}^{n-1}$ such that $(S,\,\,H_{u_1}\cup H_{u_2})$ is a Heisenberg uniqueness pair provided that  $Q(u_1),\,Q(u_2)\neq 0$ and $u_1, u_2\notin \mathcal{E}$. Here we characterize explicitly the exceptional set when $S$ is the cone.
		
		\item [($\textbf{b}_2$)] We follow the strategy in \cite{GJ} and reduce matters to quadratic curves (the parabola, hyperbola and ellipse), thus the result has three different forms. It's worth mentioning that if we consider a subset $\Lambda\subset H_{u_1}\cup H_{u_2}$ instead, it seems that both the methods in proving Theorem \ref{thm-2} and  techniques in \cite{GJ,JK} are not enough,  and currently we don't know whether a dynamical system condition like \eqref{wan-def} will work in higher dimensions,  see Remark \ref{rmk3.2}.
		
	\end{itemize}

	\subsection{Organization of notation}
	The rest of the paper is organized as follows:
	Section \ref{section2} gives the proof of Theorem \ref{thm-1} and \ref{thm-2}, as well as some examples and further extensions. In Section \ref{section3},  the proof of   Theorem \ref{thm-3} is given.

	\section{Two dimensional Heisenberg Uniqueness Pairs}\label{section2}
	
	Let $\Gamma_0$ be given by \eqref{equ1.3} and $\mu$ be a measure absolutely continuous with respect to arc length on $\Gamma_0$. Then there exist $f, g\in L^1(\R)$ such that
	\begin{equation}\label{equ2.1}
		\d\mu(x_1, x_2)=f(x_1)\d x_1\d\delta_0(x_2)+g(x_2)\d\delta_0(x_1)\d x_2,
	\end{equation}
	thus, we have
	\begin{equation}\label{equ2.2}
		\widehat {\mu} (\xi_1, \xi_2)=\widehat{f}(\xi_1)+\widehat{g}(\xi_2),\;\,\,\mbox{for all}\:(\xi_1,\xi_2)\in \R^2.
	\end{equation}
	
	\subsection{Proof of Theorem \ref{thm-1}}
	We first consider the case that $\Lambda$ satisfies the assumption \eqref{equ1.7}. By invariance property \eqref{equ1.2.1}, we can assume that $a=b=0$, i.e.,
	$$
	\Lambda_1\subset \R\times \{0\},\quad\Lambda_2\subset \{0\}\times \R.
	$$
	In view of \eqref{equ2.2}, we have
	\begin{equation}\label{equ2.3}
		\widehat{\mu}|_{\Lambda}=0 \iff \left\{
		\begin{array}{ll}
			\widehat{f}(\xi_1)+\widehat{g}(0)=0,\quad \,\,\forall \xi_1\in\pi_1(\Lambda_1),\\[0.2cm]
			\widehat{f}(0)+\widehat{g}(\xi_2)=0,\quad \,\,\forall \xi_2\in\pi_2(\Lambda_2).
		\end{array}
		\right.
	\end{equation}
	
\noindent $(ii)\Rightarrow (i)$.	Assume that both $\pi_1(\Lambda_1)$ and $\pi_2(\Lambda_2)$ are dense in $\R$. Note that  $\hat{f}, \hat{g}\in C_0(\R)$, the space of continuous functions which tending to zero at $\infty$. Letting $\pi_1(\Lambda_1)\ni \xi_1\rightarrow \infty$ and $\pi_2(\Lambda_2)\ni \xi_2\rightarrow \infty$ we obtain from \eqref{equ2.3} that
	\begin{equation}\label{equ2.4}
		\hat{f}(0)=\hat{g}(0)=0.
	\end{equation}
	Combing \eqref{equ2.3} and \eqref{equ2.4} we have in turn that
	\begin{equation}\label{equ2.5}
		\hat{f}(\xi_1)=0,\,\,\,\,\,\,\forall\,\, \xi_1\in\pi_1(\Lambda_1);\,\,\,\,\hat{g}(\xi_2)=0,\,\,\,\,\,\,\forall\,\,\xi_2\in\pi_2(\Lambda_2).
	\end{equation}
	Therefore $\hat{f}$ and $\hat{g}=0$ vanish identicaly by the density of $\pi_1(\Lambda_1)$ and $\pi_2(\Lambda_2)$ respectively. By uniqueness of the Fourier transform we conclude that $\mu=0$.
	
\noindent $(i)\Rightarrow (ii)$.	Conversely, assume that $(\Gamma_0, \Lambda)$ is a \emph{HUP}, now we prove the density by contradiction. Indeed, if $\pi_1(\Lambda_1)$ is not dense in $\R$, then there exists an open interval $I_1\subset\R\setminus\pi_1(\Lambda_1)$. Now we choose a non-zero function $f$ such that
	\begin{align}\label{equ2.8}
		f\in L^1(\R),\,\,\,\,\,\mbox{supp}\,\hat{f}\subset I_1,\,\,\,\mbox{and}\,\,\,\int_{\R}{f(x)\,\d x}=0.
	\end{align}
	Thus we have $\hat{f}(0)=0$. By taking $g\equiv0$ and our construction of $f$ above, it follows from \eqref{equ2.3} that $\widehat{\mu}|_{\Lambda}=0$ and $\mu\neq0$, which is a contradiction. Therefore $\pi_1(\Lambda_1)$ is dense in $\R$. The proof of the density of $\pi_2(\Lambda_2)$ is the same.
	
	Next, We consider the case that $\Lambda$ satisfies  the assumption \eqref{equ1.8}. The proof only needs a slight modification. By invariance property \eqref{equ1.2.1} again, we can choose $a=0$ and $T0=0$. In view of \eqref{equ2.2}, we have
	\begin{equation}\label{equ2.6}
		\widehat{\mu}|_{\Lambda}=0 \iff \left\{
		\begin{array}{ll}
			\widehat{f}(\xi)+\widehat{g}(0)=0,\quad\;\;\;\,\,\,\mbox{for all}\,\,  \xi\in\pi_1(\Lambda_1),\\[0.2cm]
			\widehat{f}(\xi)+\widehat{g}(T\xi)=0,\quad\,\,\,\,\mbox{for all}\,\,  \xi\in\pi_1(\Lambda_2).
		\end{array}
		\right.
	\end{equation}
\noindent $(ii)\Rightarrow (i)$.	Assume that both $\pi_1(\Lambda_1)$ and $\pi_2(\Lambda_2)$ are dense in $\R$.  Letting $\pi_1(\Lambda_1)\ni \xi_1\rightarrow \infty$ and by the density of $\pi_1(\Lambda_1)$  we obtain from \eqref{equ2.6} that $\hat{g}(0)=0$ and $\hat{f}\equiv0$, which in turn implies that
	\begin{equation}\label{equ2.7}
	\hat{g}(T\xi)=0,\,\,\,\,\,\,\forall\,\,\xi\in\pi_1(\Lambda_2).
	\end{equation}
	Note that $\{T\xi,\,\,\xi\in \pi_1(\Lambda_2)\}=\pi_2(\Lambda_2)$. Then by the density of $\pi_2(\Lambda_2)$ and \eqref{equ2.7}, we obtain that $\hat{g}\equiv 0$. Therefore $\mu\equiv 0$.
	
\noindent $(i)\Rightarrow (ii)$.	Conversely, assume that $(\Gamma_0, \Lambda)$ is a \emph{HUP}. If $\pi_1(\Lambda_1)$ is not dense in $\R$, then there exists an open interval $I_2\subset\R\setminus\pi_1(\Lambda_1)$ satisfying $0\notin I_2$. Now we choose a non-zero function $h(\xi)\in C_0^\infty(I_2)$ and let $s(\xi)=-h(T^{-1}\xi)$.
	In addition, we denote by
	\begin{align}\label{equ2.9}
		f(x):=\mathcal{F}^{-1}(h)(x),\,\,\,\,g(x):=\mathcal{F}^{-1}(s)(x).
	\end{align}
	This, together with the definitions of $h$ and $s$, implies
	\begin{align}\label{equ2.10}
		\hat{f}(\xi)+\hat{g}(T\xi)=0,\,\,\,\mbox{for all}\,\, \xi\in\R.
	\end{align}
	In particular, taking $\xi=0$ in \eqref{equ2.10} and noticing that  $\hat{f}(0)=h(0)=0$ (since $0\notin \text{supp}h$), we obtain that $\hat{g}(0)=\hat{g}(T0)=0$. Therefore we also have
	\begin{align}\label{equ2.11}
		\hat{f}(\xi)+\hat{g}(0)=0,\quad\;\;\;\mbox{for all}\,\,  \xi\in\pi_1(\Lambda_1).
	\end{align}
	It follows from \eqref{equ2.6}, \eqref{equ2.10} and \eqref{equ2.11} that there exists some $\mu\ne 0$ but $\hat{\mu}|_{\Lambda}=0$. If $\pi_2(\Lambda_2)$ is not dense in $\R$, then there exists an open interval $I_3\subset\R\setminus\pi_1(\Lambda_2)$.  In this case, we construct the counterexample by taking $f\equiv 0$ and choosing some $0\ne g\in L^1(\R)$ such that
 $\mbox{supp}\,\hat{g}\subset T(I_3)$ and $\int_{\R}{g(x)\,\d x}=0$. Thus we have $\hat{g}(0)=0$ and $\hat{g}(T\xi)=0$, when $\xi\in \pi_1(\Lambda_2)$. Clearly, this shows that the right hand side of \eqref{equ2.6} holds and the proof is completed.


	\subsection{Proof of Theorem \ref{thm-2}, Part $(\mathbf{A})$ }
	
	First of all, by \eqref{equ2.2} we have
	
	\begin{align}\label{equ2.12.0}
		\hat{\mu}|_{\Lambda}=0 \iff \left\{
		\begin{array}{ll}
			\hat{f}(\xi)+\hat{g}(T_1\xi)=0,\quad \,\,\forall \xi\in\pi_1(\Lambda_1)\\[0.3cm]
			\hat{f}(\xi)+\hat{g}(T_2\xi)=0,\quad \,\,\forall \xi\in\pi_1(\Lambda_2)
		\end{array}
		\right.
	\end{align}
	Suppose $I\subset\R$ with $|I|<\infty$, such that \eqref{equ1.10} holds. Since $\Phi$ is a continuous bijection from $\R\to\R$, thus  $\Phi$ is strictly monotone. We divide the proof into two cases.
	
	\noindent \emph{Case 1. $\Phi$ is strictly increasing.} In this case, we claim that
	\begin{align}\label{equ2.12}
		\Phi(\xi)=\xi,\quad\;\;\;\mbox{for all}\,\, \xi\in I.
	\end{align}
	Indeed,  assume that there exists some $\xi_0\in I$ such that $\Phi(\xi_0)\neq\xi_0$. If $\Phi(\xi_0)<\xi_0$, by \eqref{equ1.10} we have
	$
	\xi_0=\Phi^2(\xi_0)<\Phi(\xi_0)<\xi_0,
	$
	which is a contradiction;
	if $\Phi(\xi_0)>\xi_0$, we have
	$
	\xi_0=\Phi^2(\xi_0)>\Phi(\xi_0)>\xi_0,
	$
	which is also a contradiction.   Thus the claim holds.
	
	To proceed, we take two non-trivial functions $h$ and $s$ such that
	\begin{align}\label{equ2.13}
		0\neq h(\xi)\in C_0^\infty(I),\,\,\,\,s(\xi)=-h(T_2^{-1}\xi)\in C_0^\infty(\R).
	\end{align}
	Now we denote by
	\begin{align}\label{equ2.14}
		f(x):=\mathcal{F}^{-1}(h)(x),\,\,\,\,g(x):=\mathcal{F}^{-1}(s)(x).
	\end{align}
	Then we have
	\begin{align}\label{equ2.15}
		\hat f(\xi)+\hat g(T_1\xi)=h(\xi)+s(T_1\xi)=h(\xi)-h(\Phi(\xi))=0, \;\;\;\mbox{for all}\,\, \xi\in\R,
	\end{align}
	where the first and the second equality follow from \eqref{equ2.14} and \eqref{equ2.13} respectively. The third equality follows from the fact that  when $\xi\in I$, $\Phi(\xi)=\xi$; while when  $\xi\in\R\setminus I$, then $\Phi(\xi)\in\R\setminus\Phi(I)=\R\setminus I$, which implies that $h(\xi)=h(\Phi(\xi))=0$.
	
	Moreover, we have
	\begin{align}\label{equ2.16}
		\hat f(\xi)+\hat g(T_2\xi)=h(\xi)+s(T_2\xi)=h(\xi)-h(\xi)=0, \;\;\;\mbox{for all}\,\, \xi\in\R.
	\end{align}
	By \eqref{equ2.12.0}, \eqref{equ2.15} and \eqref{equ2.16}, we conclude that $\mu\neq0$ but $\hat\mu|_{\Lambda}=0$. Therefore  ($\Gamma_0, \Lambda$) is not a Heisenberg uniqueness pair.
	
	\noindent \emph{Case 2. $\Phi$ is strictly  decreasing.}  In this case, we first claim there exists a unique $\xi_0\in\R$ such that
	$\Phi(\xi)=\xi$. Indeed, since $\Phi$ is monotonically decreasing as well as bijective, the existence follows from the  intermediate value theorem. Moreover, if there exist $\xi_1\ne \xi_2$ such that $\Phi(\xi_j)=\xi_j$, $j=1, 2$. Then we deduce that
	$$
	0<(\xi_1-\xi_2)(\xi_1-\xi_2)=(\xi_1-\xi_2)(\Phi(\xi_1)-\Phi(\xi_2))<0,
	$$
	which is a contradiction. Now we can choose an interval $\tilde{I}:=(a, b)$ such that
	\begin{align}\label{equ2.17}
		\tilde{I} \subset I,\,\,\,\,\,\xi_0\notin \tilde{I}.
	\end{align}
	Secondly, we claim that
	\begin{align}\label{equ2.18}
		\tilde{I}\cap\Phi(\tilde{I})=\emptyset,\,\,\,\,\,T_1(\tilde{I})\cap T_1\circ\Phi(\tilde{I})=\emptyset.
	\end{align}
	In fact, since $\Phi$ is monotonically decreasing, we have $\Phi(\tilde{I})= (\Phi(b),\, \Phi(a))$. When $\xi_0<a$, we have $\Phi(a)<\Phi(\xi_0)=\xi_0<a$; while when $\xi_0>b$, we have $\Phi(b)>\Phi(\xi_0)=\xi_0>b$. Thus we have $\tilde{I}\cap\Phi(\tilde{I})=\emptyset$. This, together with the fact that  $T_1$ is continuous bijection, yields  the second statement in \eqref{equ2.18}.
	
	Third, we  fix some  $0\neq h_1(\xi)\in C_0^\infty(\tilde{I})$. By \eqref{equ2.18}, we are allowed to define
	\begin{align*}
		h(\xi):=
		\begin{cases}
			h_1(\xi),\quad\quad\,\,\,\,\,\,\,\,\,\mbox{if}\,\,\xi\in \tilde{I},\\[0.1cm]
			h_1(\Phi^{-1}(\xi)),\,\,\,\,\,\,\,\mbox{if}\,\,\xi\in\Phi(\tilde{I}),\\[0.1cm]
			0,\,\,\qquad\qquad\,\, \,\,\mbox{else},
		\end{cases}
		s(\eta):=
		\begin{cases}
			-h_1(T_1^{-1}(\eta)),\qquad\,\,\,\,\,\,\,\mbox{if}\,\,\eta\in T_1(\tilde{I}),\\[0.1cm]
			-h_1(\Phi^{-1}\circ T_1^{-1}(\eta)),\,\,\,\,\mbox{if}\,\,\eta\in T_1\circ\Phi(\tilde{I}),\\[0.1cm]
			0,\,\,\,\,\qquad\qquad \qquad \quad \mbox{else},
		\end{cases}
	\end{align*}
	and
	\begin{align}\label{equ2.19}
		f:=\mathcal{F}^{-1}h,\,\,\,\,g:=\mathcal{F}^{-1}s.
	\end{align}
	%
	A direct computation shows
	\begin{align*}
		\widehat g(T_1\xi)&=
		\begin{cases}
			-h_1(T_1^{-1}(T_1\xi)),\qquad\quad\,\,\,\,\,\,\,\mbox{if}\,\,T_1\xi\in T_1(\tilde{I}),\\[0.1cm]
			-h_1(\Phi^{-1}\circ T_1^{-1}(T_1\xi)),\quad\,\,\,\,\mbox{if}\,\,\,T_1\xi\in T_1\circ\Phi(\tilde{I}),\\[0.1cm]
			0,\qquad\qquad\qquad\quad\quad\,\,\,\,\,\,\,\,\,\,\mbox{if}\,\, T_1\xi\in(T_1(\tilde{I})\cup T_1\circ\Phi(\tilde{I}))^c.
		\end{cases}\\
		&=\begin{cases}
			-h_1(\xi),\qquad\quad\qquad\,\,\,\,\,\,\,\,\,\,\,\,\,\,\,\mbox{if}\,\,\xi\in \tilde{I},\\[0.1cm]
			-h_1(\Phi^{-1}(\xi)),\quad\qquad\,\,\,\,\,\,\,\,\,\,\,\,\,\mbox{if}\,\,\xi\in\Phi(\tilde{I}),\\[0.1cm]
			0,\qquad\qquad\quad\qquad\,\,\,\,\,\,\,\,\,\,\,\,\,\,\,\,\mbox{if}\,\,\xi\in(\tilde{I}\cup\Phi(\tilde{I}))^c.
		\end{cases}
	\end{align*}
	Similarly, we have
	\begin{align*}
		\widehat g(T_2\xi)
		=\begin{cases}
			-h_1(\Phi^{-1}\circ\Phi^{-1}(\xi)),\quad\,\,\,\,\,\,\,\,\mbox{if}\,\,\xi\in \tilde{I},\\[0.1cm]
			-h_1(\Phi^{-1}(\xi)),\qquad\quad\,\,\,\,\,\,\,\,\,\,\,\mbox{if}\,\,\xi\in\Phi(\tilde{I}),\\[0.1cm]
			0,\qquad\qquad\qquad\quad\,\,\,\,\,\,\,\,\,\,\,\,\,\,\mbox{if}\,\, \xi\in(\tilde{I}\cup\Phi(\tilde{I}))^c.
		\end{cases}
	\end{align*}
	Using the fact that $\Phi^{2}(\xi)=\xi$ for all $\xi\in \tilde{I}$, we have
	\begin{align}\label{equ2.20}
		\hat f(\xi)+\hat g(T_1\xi)=0,\,\,\,\,\mbox{and}\,\,\,\,\hat f(\xi)+\hat g(T_2\xi)=0, \qquad \mbox{for all}\,\,\xi\in\R.
	\end{align}
	
It follows from \eqref{equ2.12.0}, \eqref{equ2.20} that there exists $\mu\neq0$, but  $\hat\mu|_{\Lambda}=0$. Therefore ($\Gamma_0,\, \Lambda$) is not a Heisenberg uniqueness pair.
	
	\subsection{Proof of Theorem \ref{thm-2}, Part $(\mathbf{B})$}
	We organize the proof by two steps.
	
	\noindent{\it Step 1. We show that (ii) of Theorem \ref{thm-2}, Part $(\mathbf{B})$ implies
		(i).}
	
We start by rewriting  the  the equivalence relation \eqref{equ2.12.0} as follows
	\begin{subnumcases}
		{\hat{\mu}|_{\Lambda}=0 \iff }
		\hat{f}(\xi)+\hat{g}(T_1\xi)=0,\,\,\,\,\,\mbox{for all}\,\,\xi\in\pi_1(\Lambda_1),\label{equ2.22a}\\[0.1cm]
		\hat{f}(\eta)+\hat{g}(T_2\eta)=0,\,\,\,\,\,\mbox{for all}\,\, \eta\in\pi_1(\Lambda_2).\label{equ2.22b}
	\end{subnumcases}
	\begin{subnumcases}
		{\quad\qquad\iff}
		\hat{f}(T_1^{-1}\xi)+\hat{g}(\xi)=0,\,\,\,\,\,\mbox{for all}\,\, \xi\in\pi_2(\Lambda_1),\label{equ2.23a}\\[0.1cm]
		\hat{f}(T_2^{-1}\eta)+\hat{g}(\eta)=0,\,\,\,\,\,\mbox{for all}\,\,\eta\in\pi_2(\Lambda_2).\label{equ2.23b}
	\end{subnumcases}	
	We shall use these equations, together with the geometric condition (ii) in   Theorem \ref{thm-2} and the assumption that \eqref{equ1.10} doesn't hold, to solve $f$ and $g$ uniquely.
	Firstly, we need the following observation

\textbf{Claim 1.}  If  \eqref{equ1.10} doesn't hold for any open interval, then the fixed points of $\Phi^2$
	\begin{align}\label{equ2.22}
		J:=\{\xi\in\R:\Phi^2(\xi)=\xi\}
	\end{align}
	is nowhere dense and we have
	\begin{align}\label{equ2.23}
		\displaystyle\lim_{n\to\pm\infty}\Phi^{2n}(\xi)\in J\cup\{\pm\infty\},\,\,\,\,\mbox{for all}\,\,\xi\in\R.
	\end{align}
	%

\emph{Proof of Claim 1:}	If J is not nowhere dense,  one can find an open interval $I_0$ such that $J\cap I_0\subset I_0$ is dense, then by the continuity of $\Phi$ we obtain that $\Phi^2(\xi)=\xi$ for all $\xi\in I_0$, which is a contradiction. In order to prove \eqref{equ2.23}, notice that when
	$\xi\in J$, we have $\xi=\Phi^{2}(\xi)$, thus $\displaystyle\lim_{n\to\pm\infty}\Phi^{2n}(\xi)=\xi\in J$.
	On the other hand, for any
	$\xi\in\R\setminus J$, using the fact that $\{\Phi^{2n}(\xi)\}_{n\in\N}$ and $\{\Phi^{-2n}(\xi)\}_{n\in\N}$ are monotone sequences ($\Phi^2$ is strictly increasing),  it follows that either the limit of the sequence $\{\Phi^{2n}(\xi)\}_{n\in\N}$  exists or  it diverges to infinity. If
	\begin{align*}
		\displaystyle\lim_{n\to\pm\infty}\Phi^{2n}(\xi)=\xi_0\in \R,
	\end{align*}
	then it follows that
	\begin{align*}
		\xi_0=\displaystyle\lim_{n\to\infty}\Phi^{2n}(\xi)=\displaystyle\lim_{n\to\infty}\Phi^{2n+2}(\xi)=\Phi^2(\xi_0).
	\end{align*}
	Hence we deduce that $\xi_0\in J$, and the proof of \eqref{equ2.23} is complete.
	
	Next, let $E_j=\left(\overline{\pi_1(\Lambda_j)}\right)^c$ ($j=1, 2$), then $\pi_1(\Lambda_j)\cap E_j^c$ is dense in $E_j^c$. Denote by $E:=E_1\cup E_2$ and
	\begin{align}\label{equ2.26}
		F_j:=T_jE_j,\,\,\,j=1,\,2,\,\,\,\,\,F:= F_1\cup F_2.
	\end{align}
 Then $F_j=\left(\overline{\pi_2(\Lambda_j)}\right)^c$ ($j=1, 2$) and $\pi_2(\Lambda_j)\cap F_j^c$ is dense in $F_j^c$. We shall apply the non-wandering condition (ii) to deduce the following
%

\textbf{Claim 2.} Let $\hat{f}(J)=\{\hat{f}(\xi):\xi\in J\}$, where  $J$ is given in \eqref{equ2.22}. Then for any $\xi\in E$,

	\begin{align}\label{equ2.27}
		\hat{f}(\xi)\in\hat{f}(J)\cup\{0\},
	\end{align}
and for any $\eta\in F$,
	\begin{align}\label{equ2.28}
		\hat{g}(\eta)\in-\hat{f}(J)\cup\{0\}.
	\end{align}

\emph{Proof of Claim 2:}
	The key observation is  that the condition (ii) in Theorem \ref{thm-2} implies
	\begin{align}\label{equ2.29}
		\left\{
		\begin{array}{ll}
			\Phi^k(E_1)\cap\pi_1(\Lambda_1)\mbox{ is dense in }\Phi^k(E_1),\mbox{ for all }k\in\mathbb{Z}\setminus\{0\}.\\[0.05cm]
			\Phi^k(E_1)\cap\pi_1(\Lambda_2)\mbox{ is dense in }\Phi^k(E_1),\mbox{ for all }k\in\mathbb{Z},
		\end{array}
		\right.
	\end{align}
and
	\begin{align}\label{equ2.30}
		\left\{
		\begin{array}{ll}
			\Phi^k(E_2)\cap\pi_1(\Lambda_1)\mbox{ is dense in }\Phi^k(E_2),\mbox{ for all }k\in\mathbb{Z}.\\[0.05cm]
			\Phi^k(E_2)\cap\pi_1(\Lambda_2)\mbox{ is dense in }\Phi^k(E_2),\mbox{ for all }k\in\mathbb{Z}\setminus\{0\}.
		\end{array}
		\right.
	\end{align}
	In fact, since $\pi_1(\Lambda)$ is dense in $\R$, it follows from the definition of $E_1$ that $E_1\cap\pi_1(\Lambda_2)$ is dense in $E_1$, this proves the case $k=0$ in \eqref{equ2.29}; If there exists some $k_0\in\mathbb{Z}\setminus\{0\}$ such that $\Phi^{k_0}(E_1)\cap\pi_1(\Lambda_j)$ ($j=1$ or $2$) is not dense in $\Phi^{k_0}(E_1)$, then there exists some interval $I_0$ such that
	\begin{align*}
		I_0\subset\Phi^{k_0}(E_1)\,\,\,\mbox{and}\,\,\,I_0\subset\left(\overline{\pi_1(\Lambda_j)}\right)^c, \quad \,j=1\,\mbox{or}\,2.
	\end{align*}
	Denote by $\tilde{I}=\Phi^{-k_0}(I_0)$, then we have
	\begin{align*}
		\tilde{I}\subset E_1=(\overline{\pi_1(\Lambda_1)})^c\,\,\,\mbox{and}\,\,\,\Phi^{k_0}(\tilde{I})=I_0\subset\left(\overline{\pi_1(\Lambda_j)}\right)^c, \quad \,j=1\,\mbox{or}\,2,
	\end{align*}
	which contradicts with the assumption that $(\overline{\pi_1(\Lambda_1)}\cap\overline{\pi_1(\Lambda_2)})^c$  is a wandering set with $\Phi$. This finishes the proof of \eqref{equ2.29} and the proof of \eqref{equ2.30} is the same.
	
	Furthermore, recall that $T_j\pi_1(\Lambda_j)=\pi_2(\Lambda_j),\,j=1,2$, and for any $k\in\mathbb{Z}$,
	$$
T_1\Phi^k(E_1)=(T_1T_2^{-1})^kF_1,\,\,\,T_2\Phi^k(E_1)=(T_1T_2^{-1})^{k-1}F_1,
$$
$$T_1\Phi^{-k}(E_2)=(T_1T_2^{-1})^{-(k-1)}F_2,\,\,\,T_2\Phi^{-k}(E_2)=(T_1T_2^{-1})^{-k}F_2$$
As a result,  the following statements concerning $F_1,\,F_2$ follow from \eqref{equ2.29} and \eqref{equ2.30} respectively
	\begin{align}\label{equ2.31}
		\left\{
		\begin{array}{ll}
			(T_1T_2^{-1})^kF_1\cap\pi_2(\Lambda_1)\mbox{ is dense in }(T_1T_2^{-1})^kF_1,\mbox{ for all }k\in\mathbb{Z}\setminus\{0\}.\\[0.05cm]
			(T_1T_2^{-1})^kF_1\cap\pi_2(\Lambda_2)\mbox{ is dense in }(T_1T_2^{-1})^kF_1,\mbox{ for all }k\in\mathbb{Z},
		\end{array}
		\right.
	\end{align}
and
	\begin{align}\label{equ2.32}
		\left\{
		\begin{array}{ll}
			(T_1T_2^{-1})^kF_2\cap\pi_2(\Lambda_1)\mbox{ is dense in }(T_1T_2^{-1})^kF_2,\mbox{ for all }k\in\mathbb{Z}.\\[0.05cm]
			(T_1T_2^{-1})^kF_2\cap\pi_2(\Lambda_2)\mbox{ is dense in }(T_1T_2^{-1})^kF_2,\mbox{ for all }k\in\mathbb{Z}\setminus\{0\}.
		\end{array}
		\right.
	\end{align}
	
	Now we are ready to prove the claim. The observation  \eqref{equ2.29}, together with the continuity of $\hat{f}$ and $\hat{g}$,   allows us to extend
the domain of \eqref{equ2.22a}, \eqref{equ2.22b} to
	\begin{align}\label{equ2.33}
		\left\{
		\begin{array}{ll}
			\hat{f}(\xi)+\hat{g}(T_1\xi)=0,\,\,\,\,\,\,\,\mbox{for all}\,\, \xi\in\Phi^{-k-1}(E_1),\\[0.05cm]
			\hat{f}(\eta)+\hat{g}(T_2\eta)=0,\,\,\,\,\,\,\,\mbox{for all}\,\, \eta\in\Phi^{-k}(E_1),
		\end{array}
		\right.
	\end{align}
	where $k=0,1,2,\ldots$. For any $\eta\in\Phi^{-k}(E_1)$, we set $\xi=\Phi^{-1}\eta\in\Phi^{-k-1}(E_1)$ in \eqref{equ2.33}, then the above equations yield that for all $k\in \mathbb{N}$,
	\begin{align}\label{equ2.34}
		\hat f(\eta)=\hat f(\Phi^{-1}\eta)=\cdots=\hat f(\Phi^{-k}\eta),\quad\,\,\,\,\,\mbox{for all}\,\,\eta\in E_1.
	\end{align}
	Combining this with\textbf{ Claim 1}, we deduce that for all $\eta\in E_1$,
	\begin{align}\label{equ2.35}
		\hat f(\eta)=\displaystyle\lim_{n\to-\infty}\hat f(\Phi^{2n}(\eta))\in\{\hat f(\xi_0):\,\xi_0\in J\}\cup\{\hat f(\infty)\}.
	\end{align}
By Riemann-Lebesgue lemma, we have $\hat f(\infty)=0$, thus we prove \eqref{equ2.27} for $\xi\in E_1$. Using \eqref{equ2.30}, we obtain \eqref{equ2.27} for $\xi\in E_2$ with a minor modification.
	
	The proof of \eqref{equ2.28} is similar. Indeed,  we use \eqref{equ2.23a}, \eqref{equ2.23b} and the observation \eqref{equ2.31} to  obtain that  for any $k=0,1,2,\ldots$,
	\begin{align}\label{equ2.36}
		\left\{
		\begin{array}{ll}
			\hat{f}(T_1^{-1}\xi)+\hat{g}(\xi)=0,\,\,\,\,\,\,\,\mbox{for all}\,\, \xi\in(T_1T_2^{-1})^{k+1}F_1,\\[0.05cm]
			\hat{f}(T_2^{-1}\eta)+\hat{g}(\eta)=0,\,\,\,\,\,\,\,\mbox{for all}\,\, \eta\in(T_1T_2^{-1})^kF_1.
		\end{array}
		\right.
	\end{align}
	For any $\eta\in (T_1T_2^{-1})^kF_1$, let $\xi=(T_1T_2^{-1})\eta\in(T_1T_2^{-1})^{k+1}F_1$ in \eqref{equ2.36}, then the above equations yield that
	\begin{align}\label{equ2.37}
		\hat g(\eta)=\hat g((T_1T_2^{-1})\eta)=\cdots=\hat g((T_1T_2^{-1})^{k}\eta)=-\hat f(\Phi^kT_2^{-1}\eta)
	\end{align}
	holds for all $k\in \mathbb{N}$ and $\eta\in F_1$. Thus, similar to \eqref{equ2.35}, we have for all $\eta\in F_1$,
	\begin{align}\label{equ2.38}
		\hat g(\eta)=-\displaystyle\lim_{n\to+\infty}\hat f(\Phi^{2n}(T_2^{-1}\eta))\in\{-\hat f(\xi_0):\,\xi_0\in J\}\cup\{0\},
	\end{align}
which proves \eqref{equ2.28} for $\eta\in F_1$. The case $\eta\in F_2$ is the same except that we use  \eqref{equ2.32} instead of  \eqref{equ2.31}.  Therefore the proof of  Claim 2 is complete.

	Third,  we establish the following

\textbf{Claim 3.} \eqref{equ2.27} holds for all $\xi\in E^c$.


\emph{Proof of Claim 3:} There are two cases to consider.  (\romannumeral1)\,   $\Phi^k(\xi)\in E^c$  holds for all $k\in\N$.
Note that if $\xi,\Phi(\xi)\in E^c$, then $T_1\xi\in T_1(E^c)\subset F_1^c$ and $T_1\xi=T_2\Phi(\xi)\in T_2(E^c)\subset F_2^c$, that is $T_1\xi\in F^c$.
This indicates that $T_1\Phi^k(\xi)\in F^c$, where  $k\in\N$.  Since $\pi_1(\Lambda_1)$ is dense in $E^c$ and $\pi_2(\Lambda_2)$ is dense in $F^c$,
	we use \eqref{equ2.22a},  \eqref{equ2.23b} repeatedly and obtain that \eqref{equ2.35} holds;
	 (\romannumeral2)\,  there exists $k_1\in\N^\ast$ such that $\Phi^{k_1}(\xi)\in E$ and
	\begin{align}\label{equ2.41}
		\Phi^{j}(\xi)\in E^c\,\,\,\,\mbox{for}\,\,\,j=0,1,\dots,k_1-1.
	\end{align}
	Similar to case \noindent $(i)$,   we have
	\begin{align}\label{equ2.42}
		T_1\Phi^{j}(\xi)\in F^c\,\,\,\,\mbox{for}\,\,j=0,1,\dots,k_1-2 (k_1\ge2).
	\end{align}
	In view of \eqref{equ2.41} and \eqref{equ2.42}, we iterate  \eqref{equ2.22a} ($k_1$ times) and \eqref{equ2.23b} ($k_1-1$ times) to derive that
	\begin{align*}
		\hat f(\xi)=-\hat g(T_1\xi)=\hat f(\Phi(\xi))=\cdots=\hat f(\Phi^{k_1-1}(\xi))=-\hat g(T_1\Phi^{k_1-1}(\xi)).
	\end{align*}
	If $T_1\Phi^{k_1-1}(\xi)\in F^c$, since $\pi_2(\Lambda_2)$ is dense in $F^c$, we can choose $\eta=T_1\Phi^{k_1-1}(\xi)$ in \eqref{equ2.23b} and it follows that $-\hat g(T_1\Phi^{k_1-1}(\xi))=\hat f(\Phi^{k_1}(\xi))$. Note that $\Phi^{k_1}(\xi)\in E$ by assumption, then we apply \eqref{equ2.27} in \textbf{Claim 2} to obtain that
	\begin{align}\label{equ2.43}
		\hat f(\xi)=\hat f(\Phi^{k_1}(\xi))\in\{\hat{f}(\xi_0):\xi_0\in J\}\cup\{0\}.
	\end{align}
If $T_1\Phi^{k_1-1}(\xi)\in F$, we apply  \eqref{equ2.28} in \textbf{Claim 2} to deduce that
	\begin{align}\label{equ2.44}
		\hat f(\xi)=-\hat g(T_1\Phi^{k_1-1}(\xi))\in\{\hat{f}(\xi_0):\xi_0\in J\}\cup\{0\}.
	\end{align}
	Therefore, we finish the proof of the claim.

Finally, combining \textbf{Claim 3} with \eqref{equ2.27}   shows that
	\begin{align}\label{equ2.40}
		\hat f(\xi)\in\{\hat f(\xi_0):\,\xi_0\in J\}\cup\{0\},\,\forall\xi\in\R.
	\end{align}
On the other hand, by \textbf{Claim 1},  $J$ is nowhere dense. Since $\hat{f}\in C_0(\R)$, it follows that $\hat f(\xi)=0$ for all $\xi\in \R$.
Finally, we observe that \noindent $(ii)$ of Theorem \ref{thm-2}, Part $(\mathbf{B})$ also implies that  $\pi_2(\Lambda)$ is  dense in $\R$. Indeed, if $\pi_2(\Lambda)$ is not dense, then there exists some interval $I_0\subset(\overline{\pi_2(\Lambda)})^c=(\overline{\pi_2(\Lambda_1)})^c\bigcap(\overline{\pi_2(\Lambda_2)})^c$, and if we take $I:=T_1^{-1}(I_0)\subset (\overline{\pi_1(\Lambda_1)})^c$, then $\Phi(I)=T_2^{-1}(I_0)\subset (\overline{\pi_1(\Lambda_2)})^c$, this corresponds to the case 2 with $m=1$ in Proposition \ref{prop-wan} below and contradicts with the wandering property of $\Phi$. By \eqref{equ2.23a} and \eqref{equ2.23b} we deduce that $\hat g(\eta)=0$ for all $\eta\in \R$ . Therefore $f=g\equiv0$, i.e., $\mu=0$ and  $(\Gamma_0,\Lambda)$ is a \emph{Heisenberg uniqueness pair}.

		\noindent{\it Step 2. We show that (i) of Theorem \ref{thm-2}, Part $(\mathbf{B})$ implies
			(ii).}
		
		
		We prove by contradiction. First, if $\pi_1(\Lambda)$ is not dense in $\R$,  then there exists an open interval $I\subset(\pi_1(\Lambda))^c$. We construct counterexamples as follows:  Choose $f, g$  such that
		\begin{align*}
			0\neq f\in L^1(\R), \quad \mbox{supp} \hat f\subset I, \,\,\,\, g\equiv 0.
		\end{align*}
		Then one has $\mu\neq0$, while $\hat\mu|_\Lambda=0$. Thus $(\Gamma_0,\Lambda)$ is not a \emph{HUP} and the condition $\pi_1(\Lambda)$ is dense in $\R$ is necessary.
		
		Second, if $\left(\overline{\pi_1(\Lambda_1)}\cap\overline{\pi_1(\Lambda_2)}\right)^c$ is a non-wandering set associate with $\Phi=T_2^{-1}\circ T_1$, Then by the following Proposition \ref{prop-wan}, $(\Gamma_0,\Lambda)$ is not a \emph{HUP}. Therefore  the proof is complete.
		
		\begin{proposition}\label{prop-wan}
			Assume that  \eqref{equ1.10} doesn't hold for any open interval $I\subset\R$. If $(\overline{\pi_1(\Lambda_1)}\cap\overline{\pi_1(\Lambda_2)})^c$ is a non-wandering set associate with $\Phi=T_2^{-1}\circ T_1$, then $(\Gamma_0,\Lambda)$ is not a Heisenberg uniqueness pair.
		\end{proposition}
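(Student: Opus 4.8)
The plan is to produce a nonzero $\mu\in\mathcal{AC}(\Gamma_0)$ with $\widehat\mu|_\Lambda=0$, i.e. a pair $f,g\in L^1(\R)$ not both zero solving the system \eqref{equ2.12.0} nontrivially. I would first dispose of the degenerate situations: if $\pi_1(\Lambda)$ is not dense in $\R$ we are already done (this is the first half of Step 2), and if $\pi_2(\Lambda)$ is not dense one takes $f\equiv 0$ together with a nonzero $g\in L^1(\R)$ whose Fourier transform is a smooth bump supported in a complementary interval of $\overline{\pi_2(\Lambda)}$ (so that \eqref{equ2.23a}--\eqref{equ2.23b} hold trivially). Hence I may assume both $\pi_1(\Lambda)$ and $\pi_2(\Lambda)$ are dense; writing $E_j:=\left(\overline{\pi_1(\Lambda_j)}\right)^c$ and $F_j:=T_jE_j=\left(\overline{\pi_2(\Lambda_j)}\right)^c$, this forces $E_1\cap E_2=\emptyset$ and $F_1\cap F_2=\emptyset$, and we set $S:=E_1\cup E_2$.

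Since $S$ is non-wandering, the remark following Definition \ref{wander} gives an open interval $I$ and an integer $m\neq 0$ with $I\subset S$ and $\Phi^m(I)\subset S$; replacing $\Lambda_1\leftrightarrow\Lambda_2$ (which swaps $E_1,E_2$ and replaces $\Phi$ by $\Phi^{-1}$) I may take $m\ge 1$. As $\Phi$ is a homeomorphism of $\R$, the sets $I$ and $\Phi^m(I)$ are intervals, and since $E_1,E_2$ are disjoint open sets we get $I\subset E_a$ and $\Phi^m(I)\subset E_b$ for some $a,b\in\{1,2\}$; moreover, shrinking $I$ around one of its points and using Claim 1 (no interval consists of $\Phi^2$-periodic points) I may assume that $I,\Phi(I),\dots,\Phi^m(I)$ are pairwise disjoint.

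The heart of the matter is the following construction, carried out according to $(a,b)$. Fix $0\neq h_1\in C_0^\infty(I)$, set $\widehat f|_{\Phi^k(I)}:=h_1\circ\Phi^{-k}$ for $0\le k\le m$ and $\widehat f:=0$ elsewhere, and then solve \eqref{equ2.12.0} for $\widehat g$; this forces $\widehat g:=-h_1\circ\Phi^{-k}\circ T_1^{-1}=-h_1\circ\Phi^{-k-1}\circ T_2^{-1}$ on each $T_1\Phi^k(I)$ (these agree because $T_1^{-1}=\Phi^{-1}T_2^{-1}$) and $\widehat g:=0$ elsewhere. The two equations of \eqref{equ2.12.0} then hold \emph{for every} $\xi$ precisely when the propagation terminates correctly at the two ends of the block, and this is exactly where the containments $I\subset E_a$ and $\Phi^m(I)\subset E_b$ are used: an end-interval contained in $E_1$ makes the first equation vacuous there, and one contained in $E_2$ makes the second equation vacuous there, so that no further term is generated. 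When $(a,b)=(2,1)$ this works directly with the block $I,\dots,\Phi^m(I)$; the other configurations are reduced to this one either by applying the same argument to the dual system \eqref{equ2.23a}--\eqref{equ2.23b} (which transports $\widehat g$ in place of $\widehat f$ and replaces $E_j$ by $F_j$ and $\Phi$ by $T_2T_1^{-1}$), or, when some intermediate $\Phi^j(I)$ with $0<j<m$ can be arranged to lie in $E_2$, by simply restarting the construction on the shorter orbit piece $\Phi^j(I),\dots,\Phi^m(I)$. In all cases one obtains $\widehat f\neq 0$ with $\widehat\mu|_\Lambda=0$, and one checks that $f=\mathcal{F}^{-1}\widehat f$ and $g=\mathcal{F}^{-1}\widehat g$ lie in $L^1(\R)$, so $0\neq\mu\in\mathcal{AC}(\Gamma_0)$ and $(\Gamma_0,\Lambda)$ is not a Heisenberg uniqueness pair.

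The main obstacle, as this outline suggests, is the bookkeeping in the central construction: one must verify \eqref{equ2.12.0} point-by-point (splitting on whether $\xi$ lies in the orbit block, whether $\xi\in\pi_1(\Lambda_1)$ or $\pi_1(\Lambda_2)$, and in which $E_j$), and one must cover \emph{all} configurations $(a,b)$ of the non-wandering datum. I expect the cases $(a,b)=(1,1)$ and $(2,2)$ — where the orbit piece has both ends on the same side — to be the delicate ones; handling them cleanly seems to require either a short preliminary lemma massaging the non-wandering interval into the canonical shape $I\subset E_2$, $\Phi^n(I)\subset E_1$ (using Claim 1 and the dynamics of $\Phi$), or a separate "one-sided" version of the construction that pulls the bump $h_1$ back along $\Phi$ only finitely many times and invokes $\Phi^m(I)\subset E_1$ to stop the propagation.
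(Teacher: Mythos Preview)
Your approach is the paper's: transport a bump $h_1$ along a finite orbit block $I,\Phi(I),\dots,\Phi^m(I)$ to define $\widehat f$, solve for $\widehat g$, and arrange that \eqref{equ2.12.0} fails only at the two endpoints of the block, where the containments in $E_a,E_b$ render the failing equation vacuous. The paper also enumerates the same four cases $(a,b)\in\{1,2\}^2$ and treats just one explicitly, declaring the others ``the same.''

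Your worry about $(1,1)$ and $(2,2)$ is unfounded, and in fact the paper chooses $(1,1)$ as its worked case. The adjustment is precisely the ``one-sided'' variant you allude to at the end, and it is a one-line shift of the index range rather than anything delicate: for $(1,1)$ put $\widehat f$ on $\Phi^k(I_0)$ for $1\le k\le m$ (instead of $0\le k\le m$) and $\widehat g$ on $T_1\Phi^k(I_0)$ for $0\le k\le m-1$; then the \emph{second} equation of \eqref{equ2.12.0} holds on all of $\R$, while the first fails exactly on $I_0\cup\Phi^m(I_0)\subset E_1$. Analogous shifts give $(1,2)$ and $(2,2)$. Your proposed detours---passing to the dual system, finding an intermediate $\Phi^j(I)\subset E_2$, or massaging $I$ into the canonical $(2,1)$ shape---are unnecessary, and you can check that the dual-system swap does not in fact convert $(1,1)$ into $(2,1)$ anyway. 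A minor difference: the paper shrinks $I$ so that $I_0\cap\Phi(I_0)=I_0\cap\Phi^2(I_0)=\emptyset$ and then proves (its Claim~4, via monotonicity of $\Phi$ and $\Phi^2$) that the \emph{entire} forward orbit $\{\Phi^j(I_0)\}_{j\ge0}$ is pairwise disjoint, whereas your shrinking argument gives disjointness only up to index $m$; but that is all the construction needs.
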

		\begin{proof}
			We first point out that $(\overline{\pi_1(\Lambda_1)})^c\cup(\overline{\pi_1(\Lambda_2)})^c$ is a non-wandering set associate with $\Phi$ if and only if there exists some interval $I\subset \R$ such that  at least one of the following four cases holds (in Figure \ref{fig2} below, we present related examples of non-wandering sets with $T_1\xi=\xi$, $T_2\xi=-2\xi$):
			
			\emph{Case 1:} $I\subset\left(\overline{\pi_1(\Lambda_1)}\right)^c$, and $\Phi^m(I)\subset\left(\overline{\pi_1(\Lambda_1)}\right)^c\,\,\,\mbox{for some}\,\,m\in\N^+$;
			
			\emph{Case 2:}  $I\subset\left(\overline{\pi_1(\Lambda_1)}\right)^c$, and  $\Phi^m(I)\subset\left(\overline{\pi_1(\Lambda_2)}\right)^c\,\,\,\mbox{for some}\,\,m\in\N^+$;
			
			\emph{Case 3:}  $I\subset\left(\overline{\pi_1(\Lambda_2)}\right)^c$, and  $\Phi^m(I)\subset\left(\overline{\pi_1(\Lambda_1)}\right)^c\,\,\,\mbox{for some}\,\,m\in\N^+$;
			
			\emph{Case 4:}  $I\subset\left(\overline{\pi_1(\Lambda_2)}\right)^c$, and  $\Phi^m(I)\subset\left(\overline{\pi_1(\Lambda_2)}\right)^c\,\,\,\mbox{for some}\,\,m\in\N^+$.

			\begin{figure}[ht]
				\centering
				\subfigure[\emph{Case 1}]{
					\begin{tikzpicture}[scale=0.90]
						\draw[->](-2,0)--(3,0) node[right]{$\xi$};
						\draw[->](0,-2)--(0,3) node[above]{$\eta$};
						\draw[blue](2,2)--(2.8,2.8) node[above]{$\Lambda_1$};
						\draw[blue](-0.5,-0.5)--(1,1);
						\draw[blue](-2,-2)--(-1,-1);
						\draw[red](1.2,-2.4)--(-1.1,2.2) node[above]{$\Lambda_2$};
						\draw[thin,dotted](-1,-1)--(-1,2);
						\draw[thin,dotted](-0.5,-0.5)--(-0.5,1);
						\draw[thin,dotted](-1,2)--(2,2);
						\draw[thin,dotted](-0.5,1)--(1,1);
						\draw[thin,dotted](1,0)--(1,1);
						\draw[thin,dotted](2,0)--(2,2);
						\draw[green](1,0)--(2,0) node[below left,color=black,scale=0.7]{I};
						\draw[green](0,1)--(0,2) node[below right,color=black,scale=0.7]{$T_1(I)$};
						\draw[green](-1,0)--(-0.5,0) node[below left,color=black,scale=0.6]{$\Phi(I)$};
					\end{tikzpicture}
				}
				\subfigure[\emph{Case 2}]{
					\begin{tikzpicture}[scale=0.90]
						\draw[->](-2,0)--(3,0) node[right]{$\xi$};
						\draw[->](0,-2)--(0,3) node[above]{$\eta$};
						\draw[blue](2,2)--(2.8,2.8) node[above]{$\Lambda_1$};
						\draw[red](1,-2)--(0.5,-1);
						\draw[blue](-1.5,-1.5)--(1,1);
						\draw[red](0.25,-0.5)--(-1.5,3) node[above]{$\Lambda_2$};
						\draw[thin,dotted](-0.5,-0.5)--(0.25,-0.5);
						\draw[thin,dotted](-1,-1)--(0.5,-1);
						\draw[thin,dotted](-1,-1)--(-1,2);
						\draw[thin,dotted](-0.5,-0.5)--(-0.5,1);
						\draw[thin,dotted](-1,2)--(2,2);
						\draw[thin,dotted](-0.5,1)--(1,1);
						\draw[thin,dotted](1,0)--(1,1);
						\draw[thin,dotted](2,0)--(2,2);
						\draw[thin,dotted](0.25,0)--(0.25,-0.5);
						\draw[thin,dotted](0.5,0)--(0.5,-1);
						\draw[green](1,0)--(2,0) node[below left,color=black,scale=0.6]{I};
						\draw[green](0,1)--(0,2) node[below right,color=black,scale=0.7]{$T_1(I)$};
						\draw[green](0.25,0)--(0.5,0) node[below,color=black,scale=0.5]{$\Phi^2(I)$};
					\end{tikzpicture}
				}
				\subfigure[\emph{Case 3}]{
					\begin{tikzpicture}[scale=0.90]
						\draw[->](-2,0)--(3,0) node[right]{$\xi$};
						\draw[->](0,-2.8)--(0,2.5) node[above]{$\eta$};
						\draw[blue](-0.25,-0.25)--(2,2) node[above]{$\Lambda_1$};
						\draw[blue](-1,-1)--(-0.5,-0.5);
						\draw[red](0.5,-1)--(-1,2) node[above]{$\Lambda_2$};
						\draw[red](1.3,-2.6)--(1,-2);
						\draw[thin,dotted](0.5,-1)--(0.5,0.5);
						\draw[thin,dotted](1,-2)--(1,1);
						\draw[thin,dotted](-0.25,0.5)--(0.5,0.5);
						\draw[thin,dotted](-0.5,1)--(1,1);
						\draw[thin,dotted](-0.5,-0.5)--(-0.5,1);
						\draw[thin,dotted](-0.25,-0.25)--(-0.25,0.5);
						\draw[green](0.5,0)--(1,0) node[below left,color=black,scale=0.6]{I};
						\draw[green](-0.5,0)--(-0.25,0) node[above left,color=black,scale=0.6]{$\Phi(I)$};
						\draw[green](0,0.5)--(0,1) node[below right,color=black,scale=0.5]{$T_1(I)$};
					\end{tikzpicture}
				}
				\subfigure[\emph{Case 4}]{
					\begin{tikzpicture}[scale=0.90]
						\draw[->](-1.8,0)--(2.3,0) node[right]{$\xi$};
						\draw[->](0,-2.3)--(0,2.3) node[above]{$\eta$};
						\draw[blue](-1.5,-1.5)--(2,2) node[above]{$\Lambda_1$};
						\draw[red](1.2,-2.4)--(1,-2);
						\draw[red](0.5,-1)--(-0.25,0.5);
						\draw[red](-0.5,1)--(-1,2) node[above]{$\Lambda_2$};
						\draw[thin,dotted](0.5,-1)--(0.5,0.5);
						\draw[thin,dotted](1,-2)--(1,1);
						\draw[thin,dotted](-0.25,0.5)--(0.5,0.5);
						\draw[thin,dotted](-0.5,1)--(1,1);
						\draw[thin,dotted](-0.25,0)--(-0.25,0.5);
						\draw[thin,dotted](-0.5,0)--(-0.5,1);
						\draw[green](0.5,0)--(1,0) node[below left,color=black,scale=0.6]{I};
						\draw[green](0,0.5)--(0,1) node[below right,color=black,scale=0.5]{$T_1(I)$};
						\draw[green](-0.5,0)--(-0.25,0) node[below left,color=black,scale=0.5]{$\Phi(I)$};
					\end{tikzpicture}
				}
				\caption{Non-wandering sets associate with $\Phi$}\label{fig2}
			\end{figure}
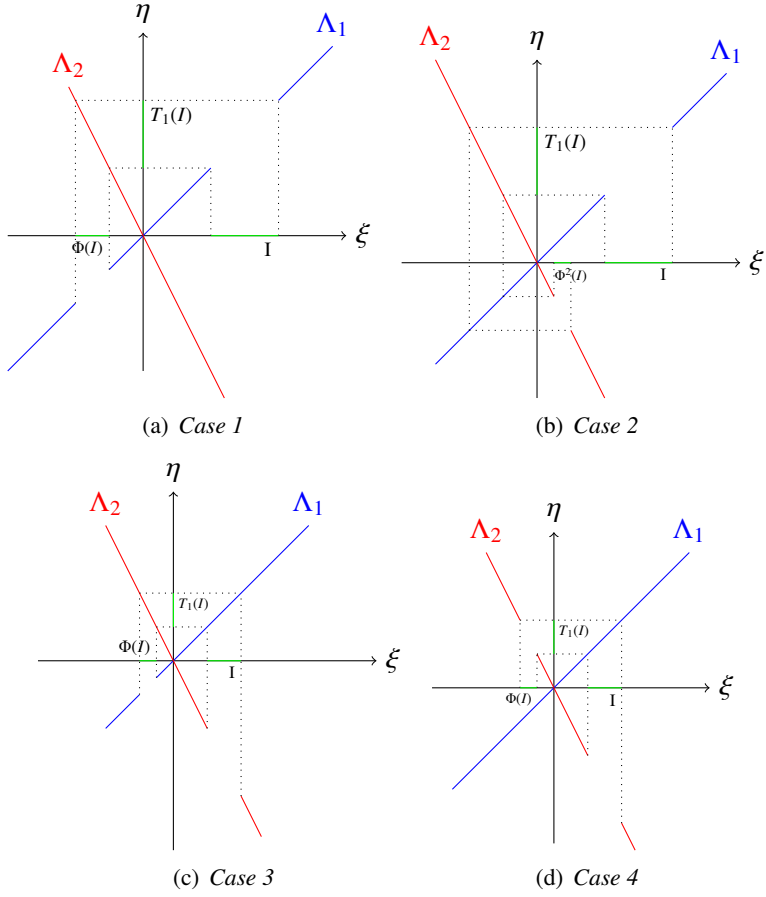
			We only provide counterexamples for \emph{case 1}, since the proof of other cases are the same. By assumption we have $I\ne \Phi^2(I)$, thus we can take a subinterval $I_1\subset I$, such that $I_1\cap\Phi^2(I_1)=\emptyset$. Moreover, we have $I_1\nsubseteq\Phi(I_1)$ (if $I_1\subseteq\Phi(I_1)$, then $I_1\subseteq\Phi(I_1)\subseteq\Phi^2(I_1)$, which is a contradiction), this shows that $I_1\cap\Phi(I_1)$ is strictly contained in $I_1$. Now we take another subinterval $I_0\subset I_1\setminus(I_1\cap\Phi(I_1))$ and the above procedure indicates that
\begin{align}\label{equ2.48}
				I_0\cap\Phi(I_0)=\emptyset\quad\, \mbox{and}\quad\, I_0\cap\Phi^2(I_0)=\emptyset.
			\end{align}
%
By \eqref{equ2.48}, we  can prove the following
			
			\textbf{Claim 4.}
			\begin{align}\label{equ2.49}
				\Phi^j(I_0)\cap\Phi^k(I_0)=\emptyset,\qquad \mbox{for all}\, \,j\neq k,\,\,j,k\in\N.
			\end{align}
			
			\emph{Proof of Claim 4:}
			If $\Phi$ is strictly increasing, when $I_0\prec\Phi(I_0)$\footnote{Let $I=(a, b)$ and $J=(c, d)$ be two intervals. Here and in what follows, we use the notation $I\prec J$ if $b<c$ and $I\succ J$ if $d<a$.}, it follows that
			\begin{align}\label{equ2.45.1}
				I_0\prec\Phi(I_0)\prec\Phi^2(I_0)\prec\cdots,
			\end{align}
			and when $I_0\succ\Phi(I_0)$, one has
			\begin{align*}
				I_0\succ\Phi(I_0)\succ\Phi^2(I_0)\succ\cdots.
			\end{align*}
			Thus \eqref{equ2.49} follows at once.
			On the other hand, if $\Phi$ is strictly decreasing, we first consider the case $I_0\prec\Phi^2(I_0)$. Note that $\Phi^2$ is strictly increasing,  then for any $m, n\in \mathbb{N}^+$, it follows that
			\begin{align}\label{equ2.50}
				I_0\prec\Phi^2(I_0)\prec\cdots\prec\Phi^{2n}(I_0) \quad\,\,\,\mbox{and}\quad\,\,\, \Phi^{2m+1}(I_0)\prec\cdots\prec\Phi^3(I_0)\prec\Phi(I_0).
			\end{align}
			If $\Phi(I_0)\prec I_0$, then the claim follows from \eqref{equ2.50} directly. If $I_0\prec\Phi(I_0)$, since $\Phi$ is strictly decreasing and $\Phi^2$ is strictly increasing, we have $I_0\prec\Phi^2(I_0)\prec\Phi^3(I_0)\prec\Phi(I_0)$. Repeating this procedure, we find that
			for any $m, n\in \mathbb{N}^+$,
			\begin{align*}
				I_0\prec\Phi^2(I_0)\prec\cdots\prec\Phi^{2n}(I_0)\prec\Phi^{2m+1}(I_0)\prec\cdots\prec\Phi^3(I_0)\prec\Phi(I_0),
			\end{align*}
			which implies \eqref{equ2.49}. The argument of the case $I_0\succ\Phi^2(I_0)$ is the same. Therefore the proof of the claim is complete.
			
			Since $T_1$ is a bijection  from $\R\to\R$,  it follows from  \textbf{Claim 4}  that
			\begin{align}\label{equ2.51}
				T_1\circ\Phi^j(I_0)\cap T_1\circ\Phi^k(I_0)=\emptyset,\,\quad\,\mbox{for all}\,\,\,  j\neq k,\, j,k\in\N.
			\end{align}
			Fix some $0\ne s_0\in C_0^\infty(\R)$ with $\mbox{supp}\,s_0\subset T_1(I_0)$, and  we define
			\begin{align*}
				s(\eta)&:=\left\{
				\begin{array}{ll}
					s_0((T_2T_1^{-1})^k(\eta)),\,\,\eta\in T_1\Phi^k(I_0),\,k=0,1,2,\dots ,m-1,\\[0.2cm]
					0,\qquad\quad\qquad\quad\eta\in\R\setminus\mathop{\cup}\limits_{k=0}^{m-1}T_1\Phi^k(I_0),
				\end{array}
				\right.\\[0.3cm]
				h(\xi)&:=\left\{
				\begin{array}{ll}
					-s_0(T_1\Phi^{-k}(\xi)),\,\,\,\,\xi\in\Phi^k(I_0),\,k=1,2,\dots ,m,\\[0.2cm]
					0,\qquad\qquad\quad\quad\xi\in\R\setminus\mathop{\cup}\limits_{k=1}^{m}\Phi^k(I_0),
				\end{array}
				\right.
			\end{align*}
			where $m$ is the same as in \emph{Case 1}  and we set
			\begin{align*}
				f:=\mathcal{F}^{-1}h\quad\,\,\mbox{and}\quad\,\,g:=\mathcal{F}^{-1}s.
			\end{align*}
			In view of  \eqref{equ2.49} and  \eqref{equ2.51},  both $h(\xi)$ and  $s(\eta)$ are well-defined and $h(\xi), s(\eta)\in C_0^\infty(\R)$. Further, a direct computation yields that
			\begin{align*}
				\hat g(T_1\xi)&=\left\{
				\begin{array}{ll}
					s_0(T_1\Phi^{-k}(\xi)),\quad\,\,\xi\in\Phi^k(I_0),\, \,\,k=0,1,2,\dots ,m-1,\\[0.2cm]
					0,\quad\qquad\quad\quad\quad\xi\in\R\setminus\mathop{\cup}\limits_{k=0}^{m-1}\Phi^k(I_0).
				\end{array}
				\right.\\[0.3cm]
				\hat g(T_2\xi)&=\left\{
				\begin{array}{ll}
					s_0(T_1\Phi^{-k}(\xi)),\quad\,\,\xi\in\Phi^k(I_0),\,k=1,2,\dots ,m,\\[0.2cm]
					0,\quad\qquad\quad\quad\quad\xi\in\R\setminus\mathop{\cup}\limits_{k=1}^{m}\Phi^k(I_0).
				\end{array}
				\right.
			\end{align*}
			This, together with the definition of $f$, implies that
			\begin{align}\label{equ2.52}
				\left\{
				\begin{array}{ll}
					\hat f(\xi)+\hat g(T_1\xi)=0,\,\,\forall\xi\in\R\setminus(I_0\cup\Phi^m(I_0)),\\[0.2cm]
					\hat f(\xi)+\hat g(T_2\xi)=0,\,\,\forall\xi\in\R.
				\end{array}
				\right.
			\end{align}
			Recall that $I_0$ satisfies the \emph{Case 1}, i.e.,  $I_0\subset\left(\overline{\pi_1(\Lambda_1)}\right)^c$ and $\Phi^m(I_0)\subset\left(\overline{\pi_1(\Lambda_1)}\right)^c$, then we have
			\begin{align}\label{equ2.53}
				\pi_1(\Lambda_1)\subset\R\setminus (I_0\cup\Phi^m(I_0)).
			\end{align}
			Combining \eqref{equ2.52}  and \eqref{equ2.53}, we obtain \eqref{equ2.12.0}.
			In conclusion, we have constructed some $\mu\neq0$ with $\hat\mu|_{\Lambda}=0$. Therefore ($\Gamma_0,\Lambda$) is not a Heisenberg uniqueness pair.
		\end{proof}
%
		
		\subsection{Examples and extensions of Theorem \ref{thm-2}}\label{extension1} We first give  some examples which are corollaries of theorem \ref{thm-2}. 
		
		\begin{example}\label{ex-1}
			If $\Lambda_1,\,\Lambda_2$ are two straight line that are symmetric about the axis, then $(\Gamma_0,\Lambda_1\cup \Lambda_2)$ is not a \emph{HUP}.
		\end{example}
		\begin{proof}
			Let $\Lambda_i=\{(\xi, k_i\xi),\,\,\xi\in\R\}$, $i=1,2$, where $k_1=-k_2\neq 0$, then
			\begin{align*}
				\Phi^2(\xi)=\xi,\,\,\,\,\mbox{for all}\,\,\xi\in\R.
			\end{align*}
			Thus $(\Gamma_0,\Lambda_1\cup \Lambda_2)$ is not a \emph{HUP} by Theorem \ref{thm-2} \textbf{Part (A)}.
		\end{proof}
%
In view of the invariance property \eqref{equ1.2.2}, we can reformulate the above result  as follows: consider
\begin{align*}
\Gamma=\{(\tau, \xi)\in \R^2,\, \tau^2- \xi^2=0\}\quad\text{and}\quad L_i=\{(x, k_ix),\,x\in\R\}.
\end{align*}
 Then $(\Gamma, L_1\cup L_2)$ is \textbf{not}  a \emph{HUP} when $k_1k_2=1$.
 This is consistent with the non-uniqueness results of 1-d wave equations. Indeed, arbitrarily fix a nonzero even  Schwartz function $h$, then $u(x,t)=h(\frac{1-k_1}{1+k_1}(x+t))-h(x-t)$ solves the wave equation
 $u_{tt}-u_{xx}=0$.
However, we deduce that  $u(x,k_1x)=u(x,k_2x)=0$ provided $k_1k_2=1$, i.e., $u(x, t)|_{L_1\cup L_2}=0$.

		\begin{example}\label{ex-2}
			Let $T_1\xi=k_1\xi,\,T_2\xi=k_2\xi$ where $0< k_1< k_2$ and $\lambda=\frac{k_2}{k_1}>1$. Let $\Lambda=\Lambda_1\cup\Lambda_2$ and arbitrarily fix  $a_0>0$, $b_0<0$.
			
			\noindent $(i)$  If $\Lambda_1=\{(\xi,T_1\xi):\xi\in\R\setminus(a_0,a)\}$ with some $a>\lambda a_0$, and $\Lambda_2=\{(\xi,T_2\xi):\xi\in\R\}$, then $(\Gamma_0,\Lambda)$ is not a \emph{HUP}.
			
			\noindent $(ii)$   If $\Lambda_1=\{(\xi,T_1\xi):\xi\in\R\setminus(E^+\cup E^-)\}$ and $\Lambda_2=\{(\xi,T_2\xi):\xi\in\R\}$, where
			\begin{align}\label{equ3.1}
				E^+=\mathop{\cup}\limits_{j=0}^{\infty}(\lambda^ja_j,\lambda^ja_{j+1}),\,\,\,E^-=\mathop{\cup}\limits_{j=0}^{\infty}(\lambda^jb_{j+1},\lambda^jb_j),
			\end{align}
			and $a_n=\frac{1}{2}(a_{n-1}+\lambda a_0)$, $b_n=\frac{1}{2}(b_{n-1}+\lambda b_0),\,n\in\N^\ast$.
			Then $(\Gamma_0,\Lambda)$ is a \emph{HUP}.

		\end{example}
		\begin{proof}
			Recall that $\Phi=T_2^{-1}\circ T_1$, thus $\Phi\xi=\frac{1}{\lambda}\cdot\xi$. Let $I=(\lambda a_0,a)\subset\R\setminus\overline{\pi_1(\Lambda_1)}$. Then
			\begin{align*}
				\Phi(I)=(a_0,\lambda^{-1}a)\subset(a_0,a)=\R\setminus\overline{\pi_1(\Lambda_1)},
			\end{align*}
			which implies that $\left(\overline{\pi_1(\Lambda_1)}\cap\overline{\pi_1(\Lambda_2)}\right)^c$ is a non-wandering set associate with $\Phi$. Then statement \noindent $(i)$  follows by Theorem \ref{thm-2},\textbf{ Part (B)}.
			
			Now we prove  \noindent $(ii)$. Since $\pi_1(\Lambda_2)=\R$, it follows that  $\pi_1(\Lambda)$ is dense in $\R$, moreover,   in order to prove that $\left(\overline{\pi_1(\Lambda_1)}\cap\overline{\pi_1(\Lambda_2)}\right)^c=E^+\cup E^-$ is a wandering set associate with $\Phi$, it suffices to prove that for any open interval $I\subset E^+\cup E^-$,
			\begin{align}\label{equ3.2}
			\Phi^k(I)\subset \overline{\pi_1(\Lambda_1)},\,\,\,\,\mbox{for all}\,\,\, k\in \mathbb{Z}\setminus\{0\}.
			\end{align}
			We first treat $E^+$. For any $I^{+}_j=(\lambda^ja_j,\lambda^ja_{j+1})$ and $k\in \mathbb{Z}\setminus\{0\}$, we have
			\begin{equation*}
				\Phi^k(I^{+}_j)=(\lambda^{j-k}a_j,\lambda^{j-k}a_{j+1}).
			\end{equation*}
			It follows from the definition of $a_n$ that
			\begin{equation}\label{equ3.3}
				a_n=\lambda a_0-(\frac{1}{2})^n(\lambda-1)a_0.
			\end{equation}
			From this we see that the sequence $\{a_n\}$ is strictly increasing and it's uniformly bounded since $a_n<\lambda\cdot a_0$. A direct computation shows that
			\begin{align*}
				\left\{
				\begin{array}{ll}
					\lambda^{j-k-1}a_{j-k}&< \lambda^{j-k}a_j<\lambda^{j-k}a_{j+1}\leq \lambda^{j-k}a_{j-k},\qquad\,\,\,\,\mbox{if}\,\,k<0,  \\
					\lambda^{j-k}a_{j-k+1}&\leq\lambda^{j-k}a_j<\lambda^{j-k}a_{j+1}<\lambda^{j-k+1}a_{j-k+1},\,\,\,\,\,\mbox{if}\,\,0<k\le j,\\
					&\lambda^{j-k}a_j<\lambda^{j-k}a_{j+1}<a_0,\qquad\qquad\,\quad\,\,\,\,\,\mbox{if}\,\,k>j.
				\end{array}
				\right.
			\end{align*}
			This shows that
			\begin{align*}
				\left\{
				\begin{array}{ll}
					I_{j-k-1}^+&\prec\Phi^k(I^{+}_j)\prec I_{j-k}^+,\quad\,\,\,\,\mbox{if}\,\,k<0,  \\
					I_{j-k}^+&\prec\Phi^k(I^{+}_j)\prec I_{j-k+1}^+,\,\,\,\,\,\mbox{if}\,\,0<k\le j,\\
					\Phi^k(I^{+}_j)&\subset(0, a_0)\qquad\qquad\,\,\,\,\,\,\mbox{if}\,\,k>j,
				\end{array}
				\right.
			\end{align*}
			where the notation $I\prec J$ is the same as in \eqref{equ2.45.1}. In particular, this shows that \eqref{equ3.2} holds with $I$ replaced by any $I^{+}_j$. Since the sequence $\{b_n\}$ has the same form as $\{a_n\}$ except that $b_0<0$, it follows that  \eqref{equ3.2} holds with $I$ replaced by any $I^{-}_j$ (defined by $I^{-}_j=(\lambda^jb_{j+1},\lambda^jb_j)$). Therefore we have shown that  $E^+\cup E^-$ is a wandering set associate with $\Phi$, and statement \noindent $(ii)$  follows by Theorem \ref{thm-2},\textbf{ Part (B)}.
		\end{proof}

		\begin{remark}\label{rmk-2}
			If we further decompose $ E^{\pm}$ into two components, $E^{\pm}=E^{\pm}_1\cup E^{\pm}_2$, where
			\begin{align*}
				E_1^+=\mathop{\cup}\limits_{j=0}^{\infty}(\lambda^ja_j,\lambda^j(a_j+a_{j+1})/2),\,\,\,\,\mbox{and}\,\,E_1^-=\mathop{\cup}\limits_{j=0}^{\infty}(\lambda^jb_{j+1},\lambda^j(b_{j+1}+b_j)/2).
			\end{align*}
			Let
			\begin{align}\label{equ3.4}
				\Lambda_j=\{(\xi,T_j\xi):\xi\in\R\setminus(E_j^+\cup E_j^-)\} ,\,\,\,\,\mbox{where}\,\,j=1, 2.
			\end{align}
			%
			Then a slight modification of the arguments above, we see that $(\Gamma_0, \Lambda_1\cup \Lambda_2)$ is a \emph{HUP}.
			
			Note that by \eqref{equ3.3}, one has
			\begin{equation*}
				|I_j^{+}|=\lambda^j\cdot(a_{j+1}-a_j)=\frac{\lambda-1}{2}\cdot (\frac{\lambda}{2})^ja_0,
			\end{equation*}
			which implies that $\displaystyle\lim_{j\to\infty}|I_j^{+}|=\infty$ if $\lambda>2$. This shows that $\pi_1(\Lambda_j)^c$ ($j=1, 2$) can contain increasing gaps of arbitrarily large size. As a comparison, we see from the  example \noindent $(i)$ that, even $\pi_1(\Lambda_1)^c$ is bounded, it may fail to be a \emph{HUP}, see Figure \ref{fig3} where  we suppose $T_1\xi=\frac{1}{2}\xi,\,T_2\xi=2\xi$.
			\begin{figure}
				\centering
				\subfigure[Example\ref{ex-2}- \noindent $(i)$]{
					\begin{tikzpicture}[scale=0.32]
						\draw[->](-1,0)--(16,0) node[right]{$\xi_1$};
						\draw[->](0,-1)--(0,8) node[above]{$\xi_2$};
						\draw[blue](-1/4,-1/8)--(1/4,1/8);
						\draw[blue](2,1)--(15,15/2) node[above]{$\Lambda_1$};
						\draw[red](-7/16,-7/8)--(4,8) node[above]{$\Lambda_2$};
						\draw[thin,dotted](1/4,0)--(1/4,1/2);
						\draw[thin,dotted](2,0)--(2,4);
						\draw[thin,dotted](1,0)--(1,2);
						\draw[thin,dotted](1/2,1)--(2,1);
						\draw[black](1/4,1/2)--(1/2,1);
						\draw[thick](1,0)--(1,0.05)node[below,outer sep=2pt,font=\tiny]at(1,0){$\lambda a_0$};
						\draw[thick](2,0)--(2,0.05)node[below,outer sep=2pt,font=\tiny]at(2,0){$a$};
					\end{tikzpicture}
				}
				\hspace{0.2in}
				\subfigure[Example\ref{ex-2}- $(ii)$]{
					\begin{tikzpicture}[scale=0.27]
						\draw[->](-1,0)--(16,0) node[right]{$\xi_1$};
						\draw[->](0,-1)--(0,8) node[above]{$\xi_2$};
						\draw[blue](-1/4,-1/8)--(1/4,1/8);
						\draw[blue](5/8,5/16)--(5/2,5/4);
						\draw[blue](13/4,13/8)--(13,13/2);
						\draw[blue](29/2,29/4)--(17,17/2) node[above]{$\Lambda_1$};
						\draw[red](-7/16,-7/8)--(4,8) node[above]{$\Lambda_2$};
						\draw[thin,dotted](1/4,0)--(1/4,1/2);
						\draw[thin,dotted](1/4,1/2)--(1,1/2);
						\draw[thin,dotted](1,2)--(1,1/2);
						\draw[thin,dotted](5/8,5/4)--(5/2,5/4);
						\draw[thin,dotted](5/8,0)--(5/8,5/4);
						\draw[thin,dotted](13/4,0)--(13/4,13/2);
						\draw[thin,dotted](5/2,0)--(5/2,5);
						\draw[thin,dotted](13,13/2)--(13/4,13/2);
						\draw[thin,dotted](10,5)--(5/2,5);
						\draw[thick](1/4,0)--(1/4,0.05)node[below,outer sep=2pt,font=\tiny]at(1/4,0){$a_0$};
						\draw[thick](5/8,0)--(5/8,0.05)node[below,outer sep=2pt,font=\tiny]at(9/8,0){$a_1$};
						\draw[thick](5/2,0)--(5/2,0.05)node[below,outer sep=2pt,font=\tiny]at(5/2,0){$\lambda a_1$};
						\draw[thick](13/4,0)--(13/4,0.05)node[below,outer sep=2pt,font=\tiny]at(15/4,0){$\lambda a_2$};
					\end{tikzpicture}
				}
				\caption{Example \ref{ex-2}}\label{fig3}
			\end{figure}
		\end{remark}

Next, we remark that the assumption of 	$\Phi$ (a continuous bijection from $\R\to\R$) can be relaxed a bit, Indeed, we can combine Theorem \ref{thm-1} with  Theorem \ref{thm-2} to prove the following
		\begin{corollary}\label{pro3}
			Let $a, b>0$ and $T_1,\,T_2:\,(-a,a)\to(-b,b)$ be continuous bijections. Let $\Lambda=\Lambda_1\cup \Lambda_2$, where
			\begin{equation*}
				\Lambda_1=\{(\xi,T_1\xi):\xi\in(-a,a)\}\cup\{(\xi,-b):\xi\in(-\infty,-a]\}\cup\{(\xi,b):\xi\in[a,\infty)\}
			\end{equation*}
			and
			\begin{equation*}
				\Lambda_2=\{(\xi,T_2\xi):\xi\in(-a,a)\}\cup\{(-a,\eta):\eta\in[b,\infty)\}\cup\{(a,\eta):\eta\in(-\infty,b]\}
			\end{equation*}
			\noindent $(\mathbf{A})$ If there exists an open interval $I\subset(-a,a)$, such that \eqref{equ1.10} holds,  then $(\Gamma_0,\Lambda)$ is not a Heisenberg uniqueness pair.
			
			\noindent $(\mathbf{B})$ Assume that  \eqref{equ1.10} doesn't hold for any open interval $I\subset (-a, a)$. Then $(\Gamma_0,\Lambda)$ is a Heisenberg uniqueness pair if and only if \noindent $(i)$: $\pi_1(\Lambda),\,\pi_2(\Lambda)$ are dense in $\R$; \noindent $(ii)$: $(-a,a)\setminus\left(\overline{\pi_1(\Lambda_1)}\cap\overline{\pi_1(\Lambda_2)}\right)$ is wandering set associate with $\Phi=T_2^{-1}\circ T_1$.
		\end{corollary}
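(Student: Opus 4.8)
The plan is to reduce Corollary \ref{pro3} to Theorems \ref{thm-1} and \ref{thm-2}: the axis‑parallel rays contained in $\Lambda_1$ and $\Lambda_2$ will pin $\widehat f$ to $[-a,a]$ and $\widehat g$ to $[-b,b]$ with vanishing endpoint values, after which the remaining equations are exactly the system treated in the proof of Theorem \ref{thm-2}, but with $\R$ replaced by the bounded interval $(-a,a)$. As in \eqref{equ2.1}--\eqref{equ2.2} I write $\d\mu=f(x_1)\,\d x_1\,\d\delta_0(x_2)+g(x_2)\,\d\delta_0(x_1)\,\d x_2$ with $f,g\in L^1(\R)$, so $\widehat\mu(\xi_1,\xi_2)=\widehat f(\xi_1)+\widehat g(\xi_2)$, and split $\widehat\mu|_\Lambda=0$ into the \emph{ray equations} $\widehat f(\xi_1)=-\widehat g(b)$ for $\xi_1\ge a$, $\widehat f(\xi_1)=-\widehat g(-b)$ for $\xi_1\le-a$, $\widehat g(\eta)=-\widehat f(-a)$ for $\eta\ge b$, $\widehat g(\eta)=-\widehat f(a)$ for $\eta\le b$, and the \emph{dynamical equations} $\widehat f(\xi)+\widehat g(T_i\xi)=0$ for $\xi$ in the $\xi_1$-projection of $\Lambda_i\cap\big((-a,a)\times\R\big)$, $i=1,2$.

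The first step is the argument of the proof of Theorem \ref{thm-1}: since $\widehat f,\widehat g\in C_0(\R)$, letting $\xi_1\to\pm\infty$ in the first two ray equations forces $\widehat g(\pm b)=0$ and $\widehat f\equiv0$ on $(-\infty,-a]\cup[a,\infty)$; in particular $\widehat f(\pm a)=0$, whence the last two ray equations give $\widehat g\equiv0$ on $(-\infty,-b]\cup[b,\infty)$. Thus $\operatorname{supp}\widehat f\subset[-a,a]$, $\operatorname{supp}\widehat g\subset[-b,b]$, with $\widehat f(\pm a)=\widehat g(\pm b)=0$, and one is left with the dynamical equations on $(-a,a)$, where $\Phi=T_2^{-1}\circ T_1$ is a continuous bijection of $(-a,a)$ onto itself.

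For part $(\mathbf{B})$, $(ii)\Rightarrow(i)$, I would rerun the iteration of the proof of Theorem \ref{thm-2}, Part $(\mathbf{B})$, Step~1, on $(-a,a)$: the hypothesis that \eqref{equ1.10} fails on every subinterval makes $J=\{\xi\in(-a,a):\Phi^2\xi=\xi\}$ nowhere dense; $\Phi^2$ being monotone on the \emph{bounded} interval $(-a,a)$, each sequence $\Phi^{2n}(\xi)$ converges in $[-a,a]$; and the wandering hypothesis (ii) propagates the dynamical equations along $\Phi$-orbits exactly as in the analogues of \eqref{equ2.29}--\eqref{equ2.32} and of Claims~1--3. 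The only change is that the role of the Riemann--Lebesgue input ``$\widehat f(\infty)=0$'' is now played by the boundary vanishing ``$\widehat f(\pm a)=0$'' established above: if $\Phi^{2n}(\xi)\to\pm a$ then $\widehat f(\xi)=\lim_n\widehat f(\Phi^{2n}\xi)=0$ by continuity. One concludes $\widehat f(\xi)\in\widehat f(J)\cup\{0\}$ for all $\xi\in(-a,a)$, hence $\widehat f\equiv0$ by nowhere‑density of $J$; then $\widehat f\equiv0$ on $\R$, and since $\pi_2(\Lambda)$ is dense (the vertical rays of $\Lambda_2$ already project onto all of $\R$) the symmetric argument gives $\widehat g\equiv0$, i.e.\ $\mu=0$.

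For the converse direction of part $(\mathbf{B})$ and for part $(\mathbf{A})$, I would import the counterexamples of Theorems \ref{thm-1} and \ref{thm-2} and of Proposition \ref{prop-wan}, arranging that the offending open interval inside $(-a,a)$ — where $\pi_1(\Lambda)$ fails to be dense, or where the non‑wandering relations hold, or where $\Phi^2=\mathrm{id}$ — together with the supports of the constructed $\widehat f$ and $\widehat g$, all lie \emph{strictly inside} $(-a,a)$ and $(-b,b)$; then $\widehat f(\pm a)=\widehat g(\pm b)=0$ and $\widehat f$, $\widehat g$ vanish beyond $\pm a$ and $\pm b$, so every ray equation holds automatically and the resulting $\mu\ne0$ still satisfies $\widehat\mu|_\Lambda=0$. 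I expect the main technical point to be precisely this compatibility check — verifying that the iteration and the convergence arguments of the proof of Theorem \ref{thm-2} survive the passage from $\R$ to $(-a,a)$ with ``values at infinity'' replaced by endpoint values — together with the bookkeeping forced by the two orientation cases (increasing vs.\ decreasing) of $T_1$ and $T_2$, which governs at which endpoints $\pm b$, $\pm a$ the rays attach to the graphs.
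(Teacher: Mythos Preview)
Your proposal is correct and follows essentially the same approach as the paper: first use the axis-parallel rays together with the Riemann--Lebesgue argument of Theorem~\ref{thm-1} to force $\widehat f\equiv0$ on $|\xi|\ge a$, $\widehat g\equiv0$ on $|\eta|\ge b$, and in particular $\widehat f(\pm a)=\widehat g(\pm b)=0$; then rerun the iteration of Theorem~\ref{thm-2}--Part~$(\mathbf{B})$ on $(-a,a)$, with the endpoint vanishing replacing the input $\widehat f(\infty)=0$ and with $\lim_{n\to\pm\infty}\Phi^{2n}(\xi)\in J\cup\{\pm a\}$ replacing \eqref{equ2.23}. The paper's own proof records exactly these two modifications and, like you, imports the counterexamples of Theorem~\ref{thm-2}--Part~$(\mathbf{A})$ and Proposition~\ref{prop-wan} verbatim for the non-HUP directions.
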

\begin{figure}
			\centering
			\begin{tikzpicture}
				\draw[->](-4,0)--(4,0)node[left,below,font=\tiny]{$\xi_1$};
				\draw[->](0,-4)--(0,4)node[right,font=\tiny]{$\xi_2$};
				\draw[blue](-3.8,-2)--(-1.5,-2);
				\draw[blue](3.8,2)--(1.5,2);
				\draw[red](-1.5,2)--(-1.5,3.8);
				\draw[red](1.5,-2)--(1.5,-3.8)node[right]{$\Lambda_2$};
				\draw[thin,dotted](-1.5,-2)--(-1.5,2);
				\draw[thin,dotted](1.5,-2)--(1.5,2);
				\draw[thin,dotted](-1.5,2)--(1.5,2);
				\draw[thin,dotted](-1.5,-2)--(1.5,-2);
				\draw(-1.5,0)--(-1.5,0.1)node[below=3.6pt]{-a};
				\draw(1.5,0)--(1.5,0.1)node[below=3.6pt]{a};
				\draw(0,-2)--(0.1,-2)node[left=3.6pt]{-b};
				\draw(0,2)--(0.1,2)node[left=3.6pt]{b};
				\draw[color=blue,domain=-1.5:1.5]plot(\x,{-4*(\x-1.5)^2/9+2})node[above]{$\Lambda_1$};
				\draw[color=red,domain=-1.5:1.5]plot(\x,{4*(\x)/3});
			\end{tikzpicture}
			\caption{Corollary \ref{pro3} }\label{fig4}
		\end{figure}
		\begin{proof}
			The proof of part \noindent $(\mathbf{A})$ follows immediately from Theorem \ref{thm-2}-part \noindent $(\mathbf{A})$.
			
			The proof of part \noindent $(\mathbf{B})$ follows from a minor modification of Theorem \ref{thm-1} and  Theorem \ref{thm-2}-part \noindent $(\mathbf{B})$.
			Indeed, we first consider the region $\xi_1\ge a$, observing that $\hat{\mu}|_{\Lambda}=0$ implies
			\begin{align}\label{equ3.5}
				\left\{
				\begin{array}{ll}
					\hat{f}(\xi_1)+\hat{g}(b)=0,\quad\,\,\, \,\,\mbox{for all}\,\,\,a\leq \xi_1\in\pi_1(\Lambda_1),\\[0.3cm]
					\hat{f}(a)+\hat{g}(\xi_2)=0,\quad \,\,\,\,\,\mbox{for all}\,-b\ge \xi_2\in\pi_2(\Lambda_2).
				\end{array}
				\right.
			\end{align}
			Parallel to \eqref{equ2.3}-\eqref{equ2.5} in Theorem  \ref{thm-1}, we deduce that
			\begin{equation}\label{equ3.6}
				\hat{f}(\xi)=-\hat g(b)=0, \,\,\,\,\xi\ge a \quad\mbox{and}\quad  \hat{g}(\xi)=-\hat f(a)=0, \,\,\,\,\xi\leq -b.
			\end{equation}
			Similarly, in the region $\xi_1\le -a$, we have
			\begin{equation}\label{equ3.7}
				\hat{f}(\xi)=-\hat g(-b)=0, \,\,\,\,\xi\leq a \quad\mbox{and}\quad  \hat{g}(\xi)=-\hat f(-a)=0, \,\,\,\,\xi\ge b.
			\end{equation}
			Finally, when $-a<\xi_1<a$, we can repeat the proof of  Theorem \ref{thm-2}-part \noindent $(\mathbf{B})$ except for the following differences:\\
			\noindent $(i)$ $J=\{\xi\in(-a,a):\Phi^2(\xi)=\xi\}$ is a nowhere dense set and
			\begin{align*}
				\displaystyle\lim_{n\to\pm\infty}\Phi^{2n}(\xi)\in J\cup\{a,-a\},\,\,\,\,\,\mbox{for all}\,\,\,\xi\in(-a,a).
			\end{align*}
			\noindent $(ii)$  By \eqref{equ3.6} and \eqref{equ3.7}, $\hat{f}$ and $\hat{g}$ vanish at end points $\pm a$ and $\pm b$ respectively.
		\end{proof}

		\section{Higher dimensional Heisenberg Uniqueness Pairs--Proof of Theorem \ref{thm-3}}\label{section3}
	
		We follow the approach in \cite{GJ}. Let $\pi^\perp$ be the orthogonal projection on $\text{span}(u_1, u_2)^\perp$. There exists a measurable subset $S_0\subset S$ such that the decomposition $y=x+u,\, x\in S_0,\, u\in \text{span}(u_1, u_2)$ is unique for every $y\in S$, and we write $x=\tilde{\pi}^\perp(y)$. Using disintegration theorem (see \cite[p. 10]{GJ}), one has
		\begin{align}\label{equ3.13}
			\int_{S}\phi(x)d\mu(x)=\int_{S_0}\int_{(\widetilde{\pi}^\perp)^{-1}(x)}\phi(y)d\mu_x(y)d\nu(x),\,\quad\,\phi\in C_0(S).
		\end{align}
		Moreover $\mu_x$ is supported on
		\begin{align}\label{equ3.11}
			S_x:=S\cap(x+span(u_1,v_2))=\{x+su_1+tv_2:(s,t)\in\Sigma\},
		\end{align}
		where $\Sigma$ be the set of points $(s,t)\in\R^2$ satisfying
		\begin{align}\label{equ3.12}
			As^2+2Bst+Ct^2+2Ds+2Et=0.
		\end{align}
		According to \cite[Lemma 2.4]{GJ}, one has
		\begin{align}\label{equ3.10}
			\left.\widehat{\mu}\right|_{H_{u_1}\cup H_{u_2}}=0 \Longleftrightarrow \left.\widehat{\mu}_x\right|_{\R u_1\cup \R u_2}=0
		\end{align}
		for $\nu$-almost all $x\in S_0$.
		Combining \eqref{equ3.10}, \eqref{equ3.13} and the fact $supp\,\mu_x\subset S_x$, it follows that
		\begin{align}\label{equ3.14}
			(S,H_{u_1}\cup H_{u_2})\mbox{ is a \emph{HUP}}\Leftrightarrow (S_x,\R u_1\cup\R u_2) \mbox{ is a \emph{HUP} for }\nu\mbox{-almost all }x\in S_0.
		\end{align}
		
		As is observed in \cite{GJ}, $S_x$ is a conic section and its type is determined by the discriminant of the quadratic form in \eqref{equ3.12}, i.e., an ellipse (when $AC-B^2>0$), or parabola (when $AC-B^2=0$), hyperbola (when $AC-B^2<0$).  \eqref{equ3.14} reduces the study in two dimensional case, and we divide the analysis into three cases.
		
		\emph{Case 1}:
		$AC-B^2=0$. It follows from \eqref{equ3.12} that $S\cap(x+span(u_1,v_2))$ is a parabola for $\nu-$ almost all $x\in S_0$.
		It follows from \cite[Theorem 1]{Sj2} that $(\Gamma,\Lambda)$ is a \emph{HUP} when $\Gamma$ is a parabola and $\Lambda$ is the union of two different straight lines.
		Therefore, $(S\,,\,H_{u_1}\cup H_{u_2})$ is a \emph{HUP}.
		
		\emph{Case 2}:
		$AC-B^2<0$. In this case,  $S\cap(x+span(u_1,v_2))$ is a hyperbola for $\nu-$ almost all $x\in S_0$. Using the invariance property \eqref{equ1.2.1}, it suffices to consider
		\begin{align}\label{equ3.12'}
			(As+Bt)^2-(B^2-AC)t^2=D^2-\frac{(AE-BD)^2}{B^2-AC}, \tag{\ref{equ3.12}'}
		\end{align}
		where $D^2(B^2-AC)-(AE-BD)^2\ne 0$ (the non-degenerate case). Under the rotation transformation
		\begin{equation}\label{equ3.17}
			\left\{
			\begin{array}{ll}
				s'=\cos\varphi\cdot s+\sin\varphi\cdot t,\\[0.15cm]
				t'=-\sin\varphi\cdot s+\cos\varphi\cdot t,
			\end{array}
			\right.
		\end{equation}
		with $\varphi$ satisfying
		\begin{align}\label{equ3.18}
			\tan2\varphi=\frac{2B}{A-C}.
		\end{align}	
		We rewrite \eqref{equ3.12'} in the following standard  form
		\begin{align}\label{equ3.12''}	
			g(\varphi)\cdot s'^2 + g(\varphi+\frac{\pi}{2})\cdot t'^2=1
		\end{align}
		with
		\begin{equation}\label{equ3.24}
			g(\varphi)=\frac{(B^2-AC)[(A\cos\varphi+B\sin\varphi)^2-(B^2-AC)\sin^2\varphi]}{D^2(B^2-AC)-(AE-BD)^2}
		\end{equation}
		and $g(\varphi)g(\varphi+\frac{\pi}{2})<0$,
		Notice that \eqref{equ3.18} has two solutions $\varphi_1\in [0,\pi/2)$ or $\varphi_2=\varphi_1+\frac{\pi}{2}$. Thus we can choose one (still denoted by $\varphi$ for convenience) such that
		\begin{align}\label{equ3.25}
			a_0^{-2}:=g(\varphi)>0,\qquad  b_0^{-2}:=-g(\varphi+\frac{\pi}{2})>0.
		\end{align}
		Inserting \eqref{equ3.25} into \eqref{equ3.12''}	yields
		\begin{align}\label{equ3.12'''}
			S_x=\left\{(s',t')\in\R^2:\,\frac{s'^2}{a_0^2}-\frac{t'^2}{b_0^2}=1\right\}. \tag{\ref{equ3.12}\mbox{''}}
		\end{align}
		By \eqref{equ3.17} and the fact \eqref{equ1.20}, we notice that
		\begin{equation}\label{equ3.26}
			\R u_1=l_{-\varphi}\quad\,\mbox{and}\quad\,  \R u_2=l_{-(\varphi-\theta_0)}
		\end{equation}
		in the coordinate system generated by $s',\,t'$.
		
		Consider $\Lambda=l_{\theta_1}\cup l_{\theta_2}$ where $l_\theta=\{(t\cos\theta,t\sin\theta):t\in\R\},\,0\le\theta<\pi$. We recall the following result due to Jaming and Kellay.
		\begin{theorem}\cite[Theorem 3.5]{JK} \label{thm3.5}
			Let $S_x$ be given by \eqref{equ3.12'''}.\footnote{We remark that Theorem 3.5 in \cite{JK} is proved  for $a_0=b_0=1$,  the general case follows from the invariance property \eqref{equ1.2.2}. \noindent(iii) is not stated in \cite{JK}, but the technique proved for  \noindent(ii) also applies with a minor modification. }
			
			\noindent(i) If $\tan\theta=\pm a_0/b_0$, then $(S_x, l_\theta)$ is a \emph{HUP}.
			
			\noindent(ii) If $\theta_1\neq\theta_2\in(0,\pi)$, then $(S_x, \Lambda)$ is a \emph{HUP} if and only if $\tan\theta_1\cdot\tan\theta_2\neq a_0^2/b_0^2$.
			
			\noindent(iii) If $\theta_1=0$, then $(S_x, \Lambda)$ is a \emph{HUP} if and only if $\theta_2\neq\frac{\pi}{2}$
		\end{theorem}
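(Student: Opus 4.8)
The plan is to use the invariance property (Inv-2) to reduce $S_x$ to the standard hyperbola $\mathcal H_0=\{u^2-v^2=1\}$, invoke \cite[Theorem 3.5]{JK} for parts (i) and (ii), and adapt its dynamical mechanism for (iii). With $T=\mathrm{diag}(a_0,b_0)$ one has $T^{-1}(S_x)=\mathcal H_0$ and $T^{*}=T$, so (Inv-2) sends $l_\theta$ to $l_{\tilde\theta}$ with $\tan\tilde\theta=(b_0/a_0)\tan\theta$; hence $\tan\theta=\pm a_0/b_0$ becomes $\tilde\theta\in\{\tfrac\pi4,\tfrac{3\pi}4\}$ (the asymptotic directions of $\mathcal H_0$), $\tan\theta_1\tan\theta_2\ne a_0^2/b_0^2$ becomes $\tan\tilde\theta_1\tan\tilde\theta_2\ne1$, and $\theta_1=0$, $\theta_2=\tfrac\pi2$ become $\tilde\theta_1=0$, $\tilde\theta_2=\tfrac\pi2$. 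After this reduction (i) and (ii) are literally \cite[Theorem 3.5]{JK} for $a_0=b_0=1$; in fact (i) is immediate, since $(u,v)\mapsto u+v$ is a bijection of $\mathcal H_0$ onto $\R\setminus\{0\}$ (solve $u+v=w$, $u-v=1/w$), so $\widehat\mu|_{l_{\pi/4}}=0$ is the vanishing of the Fourier transform of a finite, absolutely continuous measure on $\R$, whence $\mu=0$.

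For (iii) I would work on $\mathcal H_0$ directly. Parametrize the branches $\mathcal H_0^{\pm}$ by $r\in\R$ via $(\pm\cosh r,\sinh r)$ and write $d\mu=f^{+}(r)\,dr$ on $\mathcal H_0^{+}$ and $f^{-}(r)\,dr$ on $\mathcal H_0^{-}$, with $f^{\pm}\in L^1(\R)$ (arc-length density absorbed). For a line $l_\phi$, $\widehat\mu|_{l_\phi}=0$ is equivalent, in the variable $\tau$, to the vanishing of the pushforward of $\mu$ under $(u,v)\mapsto u\cos\phi+v\sin\phi$, hence of its density on $\R$. For $l_0$ the functional is $r\mapsto\pm\cosh r$, an even $2$-to-$1$ fold ($r\leftrightarrow-r$) on each branch with disjoint images, so $\widehat\mu|_{l_0}=0$ forces $f^{+}$ and $f^{-}$ to be odd. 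For $l_{\tilde\theta_2}$, writing $m=\tan\tilde\theta_2$, the functional $\pm\cos\tilde\theta_2\cosh r+\sin\tilde\theta_2\sinh r$ is a constant times $\cosh(r\mp c)$ if $|m|<1$ (a fold on each branch, $c=\mathrm{arctanh}\,m$, branch images disjoint), a constant times $e^{\pm r}$ if $|m|=1$ (injective on each branch, disjoint half-line images), a constant times $\sinh(r\mp c')$ if $|m|>1$ (injective on each branch, both branch images equal to $\R$, $c'=\mathrm{arctanh}(1/m)$), and equals $\sinh r$ if $m=\infty$ (i.e.\ $\tilde\theta_2=\tfrac\pi2$; injective on each branch, images both $\R$).

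The conclusion then splits according to whether $\tilde\theta_2=\tfrac\pi2$. If $\tilde\theta_2\ne\tfrac\pi2$: when $|m|<1$ the branches stay decoupled, and on each branch composing the $l_0$-fold $r\mapsto-r$ with the $l_{\tilde\theta_2}$-fold $r\mapsto\mp2c-r$ (both reflections, whose Jacobian factors at paired preimages cancel) gives $f^{\pm}(r)=f^{\pm}(r\mp2c)$, hence $f^{\pm}\equiv0$ since $c\ne0$ and $f^{\pm}\in L^1(\R)$; when $|m|=1$, injectivity with disjoint branch images already yields $f^{\pm}\equiv0$ from $\widehat\mu|_{l_{\tilde\theta_2}}=0$ alone; when $|m|>1$, $\widehat\mu|_{l_{\tilde\theta_2}}=0$ couples the branches as $f^{-}(r+2c')=-f^{+}(r)$, which with the oddness of $f^{\pm}$ forces $f^{+}(r)=f^{+}(r+4c')$, hence $f^{\pm}\equiv0$ since $c'\ne0$. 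Thus $(S_x,l_0\cup l_{\tilde\theta_2})$ is a \emph{HUP}. If $\tilde\theta_2=\tfrac\pi2$, the two lines are the coordinate axes: $l_0$ forces $f^{\pm}$ odd, $l_{\pi/2}$ forces $f^{-}=-f^{+}$, and there is no further constraint, so any nonzero odd $\varphi\in C_c^{\infty}(\R)$ with $f^{+}=\varphi$, $f^{-}=-\varphi$ gives $\mu\ne0$ with $\widehat\mu|_{l_0\cup l_{\pi/2}}=0$; hence it is not a \emph{HUP}.

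The step I expect to be the main obstacle is the trichotomy in (iii): establishing the affine normal forms of $\pm\cos\tilde\theta_2\cosh r+\sin\tilde\theta_2\sinh r$ in the regimes $|m|<1$, $|m|=1$, $|m|>1$, deciding in each regime whether the two branches decouple (disjoint images) or couple (coinciding images), and checking that the Jacobian weights at paired preimages cancel so that the composite relation is honest periodicity of $f^{\pm}$ rather than $f^{\pm}(r)=\lambda(r)\,f^{\pm}(\Psi(r))$ with a nontrivial weight $\lambda$ (which would instead require the wandering-set/$L^1$ iteration of Proposition~\ref{prop-wan}). As the footnote indicates, this is a modest variant of the $m\notin\{0,\infty\}$ analysis of \cite{JK} for part (ii); the genuinely new observation is simply that $\tilde\theta_2=\tfrac\pi2$ — the coordinate-axes configuration, where the shift $c'$ degenerates to $0$ — is the unique exceptional case.
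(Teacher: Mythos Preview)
Your proposal is correct and follows precisely the strategy the paper indicates: the paper does not give its own proof of this theorem but cites \cite[Theorem~3.5]{JK}, with the footnote explaining that the general $a_0,b_0$ case follows from \eqref{equ1.2.2} and that (iii) is a minor modification of (ii). Your reduction via $T=\mathrm{diag}(a_0,b_0)$ implements exactly this, and your branch-by-branch analysis of (iii) on $\mathcal H_0$---decoupled folds when $|m|<1$, injective disjoint-image maps when $|m|=1$, coupled bijections when $|m|>1$, with the Jacobian cancellation you flag yielding honest periodicity of $f^{\pm}$---is a correct realization of the ``minor modification'' the footnote alludes to, including the counterexample at $\tilde\theta_2=\pi/2$.
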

		We shall apply Theorem \ref{thm3.5} to show that
		\begin{equation}\label{equ3.18.1}
			(S_x, \R u_1\cup\R u_2))\mbox{ is a \emph{HUP}}\Longleftrightarrow B(v_1, v_2)\ne 0.
		\end{equation}
		
		In view of \noindent(i)--\noindent(iii) above, the proof is divided into tree subcases.
		
		
		\emph{Subcase 1: $\tan\varphi=\pm a_0/b_0\,\text{or}\,\tan(\varphi-\theta_0)=\pm a_0/b_0$.} In view of \noindent(i) in Theorem \ref{thm3.5}, it suffices to prove that $B(v_1, v_2)\ne 0$.
		
		When $\tan\varphi=\pm a_0/b_0$ (which implies that $\varphi\ne 0,\frac{\pi}{2}$). By \eqref{equ3.25}, it follows that
		\begin{equation*}
			\tan^2\varphi=-\frac{C-2B\tan\varphi+A\tan^2\varphi}{A+2B\tan\varphi+C\tan^2\varphi}.
		\end{equation*}
		Note that $\tan2\varphi$ satisfies \eqref{equ3.18} and use the fact $\varphi\ne 0,\frac{\pi}{2}$, we obtain that  \noindent(i)  $B\ne 0$; \noindent(ii)  $\tan^2\varphi= a^2_0/b^2_0$ if and only if $C=0$. These, together with the assumption  $\theta_0\in (0, \pi)$,  imply that $B(v_1, v_2):=C\cos\theta_0-B\sin\theta_0=-B\sin\theta_0\ne 0$.
		
		When $\tan(\varphi-\theta_0)=\pm a_0/b_0$. By \eqref{equ3.25} we have
		\begin{equation*}
			\left(\frac{\sin\varphi\cos\theta_0-\cos\varphi\sin\theta_0}{\cos\varphi\cos\theta_0+\sin\varphi\sin\theta_0}\right)^2
			=-\frac{A\sin^2\varphi-2B\sin\varphi\cos\varphi+C\cos^2\varphi}{A\cos^2\varphi+2B\sin\varphi\cos\varphi+C\sin^2\varphi}.
		\end{equation*}
		This implies that $\cos\theta_0(C\cos\theta_0-B\sin\theta_0)+\sin\theta_0(A\sin\theta_0-B\cos\theta_0)=0$. We prove by contradiction that $B(v_1, v_2)\neq0$. In fact, if $B(v_1, v_2)=C\cos\theta_0-B\sin\theta_0=0$, thus $A\sin\theta_0-B\cos\theta_0=0$, combining the two equation yields that $B^2=AC$, which is a contradiction.
		

		\emph{Subcase 2: $B\neq0$ and $\theta_0\neq\varphi$}.  By \eqref{equ3.18}, we have $\varphi\neq0$ and $\frac{\pi}{2}$.  Apply Theorem \ref{thm3.5} \noindent(ii) with $\theta_1=\pi-\varphi$, $\theta_2=\theta_0-\varphi$, we deduce that  $(S_x, \R u_1\cup\R u_2)$ is a \emph{HUP} if and only if $\tan\varphi\cdot\tan(\varphi-\theta_0)\neq a_0^2/b_0^2$.
		Therefore, it suffices to  prove that
		\begin{equation}\label{equ3.20}
			\tan\varphi\cdot\tan(\varphi-\theta_0)= a_0^2/b_0^2 \Longleftrightarrow B(v_1, v_2)=0.
		\end{equation}
		
		If $\cos\theta_0=0$, i.e., $\theta_0=\frac{\pi}{2}$, we have $v_1=-u_1$ and $v_2=u_2$ by \eqref{equ1.20}. Then $B(\nu_1,\nu_2)=-B\neq0$ by assumption. Meanwhile, $\tan\varphi\cdot\tan(\varphi-\theta_0)=-1\neq a_0^2/b_0^2$.  Thus \eqref{equ3.20} holds.
		
		If $\cos\theta_0\neq0$, using the expression \eqref{equ3.25}, we see that $\tan\varphi\cdot\tan(\varphi-\theta_0)= a_0^2/b_0^2$ if and only if
		\begin{align}\label{equ3.28}
			\frac{\tan^2\varphi-\tan\theta_0\tan\varphi}{1+\tan\theta_0\tan\varphi}=-\frac
			{(B\cos\varphi-A\sin\varphi)^2-(B^2-AC)\cos^2\varphi}{(A\cos\varphi+B\sin\varphi)^2-(B^2-AC)\sin^2\varphi}.
		\end{align}
		By \eqref{equ3.18} and the assumption $B\neq0$, we have
		\begin{equation}\label{equ3.29}
			1-\tan^2\varphi=\frac{A-C}{B}\tan\varphi.
		\end{equation}
		Inserting \eqref{equ3.29} into \eqref{equ3.28}, we find that  \eqref{equ3.28} holds if and only if
		\begin{equation}\label{equ3.30}
			C-B\cdot\tan\theta_0=0.
		\end{equation}
		On the other hand, by \eqref{equ1.20} and \eqref{equ3.8.1}, we have
		\begin{equation}\label{equ3.31.1}
			B(v_1,v_2)=B(\cos\theta_0v_2-\sin\theta_0u_1,  v_2)=\cos\theta_0(C-B\cdot\tan\theta_0).
		\end{equation}
		This, together with the assumption that $\cos\theta_0\neq0$, yields that \eqref{equ3.30} holds if and only if $B(v_1, v_2)=0$.  Therefore the proof of \eqref{equ3.20} is complete.
		
		\emph{Subcase 3: $B=0$ or $\theta_0=\varphi$}.  In this case, we have $\varphi=0,\,\frac{\pi}{2}$ or $\theta_0$. It follows from   Theorem \ref{thm3.5} \noindent(iii) above that $(S_x,\R u_1\cup\R u_2)$ is a \emph{HUP} if and only if $\theta_0\neq\frac{\pi}{2}$.
		Thus, it suffices to prove
		\begin{equation}\label{equ3.27}
			\theta_0=\frac{\pi}{2} \Longleftrightarrow B(v_1, v_2)=C\cos\theta_0-B\sin\theta_0=0.
		\end{equation}
		
		When $B=0$, recall that $AC-B^2<0$, hence $C\ne 0$ and  $B(v_1, v_2)=0$ if and only if $\cos\theta_0=0$, i.e., $\theta_0=\frac{\pi}{2}$.
		Therefore \eqref{equ3.27} holds.
		
		When $\theta_0=\varphi$, if $\theta_0=\frac{\pi}{2}$, then $\varphi=\frac{\pi}{2}$  and by \eqref{equ3.18} we have $B=0$, which implies that $B(v_1, v_2)=0$; Conversely, if $B(v_1, v_2)=0$, we prove $\theta_0=\frac{\pi}{2}$ by contradiction. Indeed, if $\theta_0\ne\frac{\pi}{2}$, then $\cos\varphi\neq0$ and $B\neq0$. Observe that in this case $B(v_1, v_2)=0$ if and only if $C\cos\varphi -B\sin\varphi =0$ i.e., $\tan\varphi=\frac{C}{B}$. However, this together with \eqref{equ3.18} yields that $B^2=AC$, which contradicts with the fact $AC-B^2<0$. Therefore \eqref{equ3.27} holds.
		
		Combining subcase 1, 2 and 3,  we complete the proof when $AC-B^2<0$.
		
		\emph{Case 3: $AC-B^2>0$.} $S\cap(x+span(u_1,u_2))$ is an ellipse for $\nu-$ almost all $x\in S_0$. In this case, we need the following result:
		
		\begin{theorem}\cite{JK,Le,Sj} \label{thm-3.1}
			Let
			$$\Gamma=\{(x_1,x_2)\in\R^2:\,\frac{x_1^2}{a^2}+\frac{x_2^2}{b^2}=1,\,a\ge b>0\}$$
			and $\Lambda=l_{\theta_1}\cup l_{\theta_2}$ where $l_\theta=\{(t\cos\theta,t\sin\theta):t\in\R\},\,0\le\theta<\pi$. Then $(\Gamma,\Lambda)$ is a \emph{HUP} if and only if
			\begin{align}\label{equ3.15}
				\arctan\left(\frac{b}{a}\tan\theta_2\right)-\arctan\left(\frac{b}{a}\tan\theta_1\right)\notin\pi\mathbb{Q}.
			\end{align}
		\end{theorem}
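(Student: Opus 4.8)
The plan is to follow the circle-to-rotation reduction underlying \cite{HM,JK}, which converts the statement into an ergodicity dichotomy for a rotation of $\mathbb{T}:=\R/2\pi\mathbb{Z}$. First I would parametrize $\Gamma$ by $x(t)=(a\cos t,\,b\sin t)$, so that every $\mu\in\mathcal{AC}(\Gamma)$ has the form $\d\mu=\psi(t)\,\d t$ with $\psi\in L^1(\mathbb{T})$ (the arc-length Jacobian being bounded above and below, absorbing it is harmless). For $u_\theta=(\cos\theta,\sin\theta)$ one has $\langle x(t),\,su_\theta\rangle=s\,g_\theta(t)$ with $g_\theta(t)=a\cos\theta\cos t+b\sin\theta\sin t=\rho_\theta\cos(t-\phi_\theta)$, where $\rho_\theta=\sqrt{a^2\cos^2\theta+b^2\sin^2\theta}$ and $\phi_\theta$ is fixed by $\rho_\theta\cos\phi_\theta=a\cos\theta$, $\rho_\theta\sin\phi_\theta=b\sin\theta$; in particular $\tan\phi_\theta=\tfrac{b}{a}\tan\theta$. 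Hence $\widehat{\mu}(su_\theta)=\int_{\mathbb{T}}e^{-\pi i s g_\theta(t)}\psi(t)\,\d t$ is the one-dimensional Fourier transform of the pushforward measure $(g_\theta)_\ast(\psi\,\d t)$, so $\widehat{\mu}|_{l_\theta}=0$ if and only if $(g_\theta)_\ast(\psi\,\d t)=0$.

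The crucial step is to evaluate this pushforward. The map $g_\theta$ folds $\mathbb{T}$ two-to-one onto $[-\rho_\theta,\rho_\theta]$, the two preimages of a value being exchanged by the involution $\sigma_\theta(t):=2\phi_\theta-t$, and the two inverse branches carry the \emph{same} Jacobian $1/\sqrt{\rho_\theta^2-v^2}$; changing variables on each branch gives
\[
	(g_\theta)_\ast(\psi\,\d t)\;=\;\frac{\psi(t(v))+\psi(\sigma_\theta(t(v)))}{\sqrt{\rho_\theta^2-v^2}}\,\d v,
\]
where $t(v)$ denotes either branch of $g_\theta^{-1}$; since the two critical points of $g_\theta$ form a null set, $\widehat{\mu}|_{l_\theta}=0$ is equivalent to $\psi+\psi\circ\sigma_\theta=0$ a.e.\ on $\mathbb{T}$. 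Applying this to both lines, $\widehat{\mu}|_{l_{\theta_1}\cup l_{\theta_2}}=0$ is equivalent to $\psi=-\psi\circ\sigma_1=-\psi\circ\sigma_2$ (writing $\sigma_i=\sigma_{\theta_i}$). As $\sigma_1\circ\sigma_2$ is the rotation $R_{-\alpha}$ with $\alpha:=2(\phi_{\theta_2}-\phi_{\theta_1})$, composing the two relations shows $\psi$ is $R_\alpha$-invariant. If $(\phi_{\theta_2}-\phi_{\theta_1})/\pi\notin\mathbb{Q}$, then $\alpha/2\pi\notin\mathbb{Q}$, $R_\alpha$ is ergodic, so $\psi$ is a.e.\ constant, and $\psi+\psi\circ\sigma_1=0$ forces $\psi\equiv0$, i.e.\ $\mu=0$: a HUP. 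If $(\phi_{\theta_2}-\phi_{\theta_1})/\pi=p/q\in\mathbb{Q}$ in lowest terms, then $R:=R_\alpha$ has order $q$, and using $\sigma_1R\sigma_1=R^{-1}$ one checks that for $0\neq\psi_0\in C_0^\infty(\mathbb{T})$ supported in a short enough arc the function $\psi:=\sum_{j=0}^{q-1}(\psi_0-\psi_0\circ\sigma_1)\circ R^j$ is nonzero, lies in $L^1(\mathbb{T})$, and satisfies $\psi+\psi\circ\sigma_1=\psi+\psi\circ\sigma_2=0$; the corresponding $\mu\neq0$ has $\widehat{\mu}|_\Lambda=0$, so it is not a HUP.

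It remains to identify the phase. Substituting $\phi_\theta=\arctan\!\big(\tfrac{b}{a}\tan\theta\big)$ (the choice of branch is immaterial, since only $\phi_{\theta_2}-\phi_{\theta_1}$ modulo $\pi$ enters, and the value $\pi/2$ is taken when $\theta=\pi/2$) turns the dichotomy $(\phi_{\theta_2}-\phi_{\theta_1})/\pi\notin\mathbb{Q}$ into exactly \eqref{equ3.15}; the degenerate configurations ($\theta_i\in\{0,\pi/2\}$, or $\theta_1=\theta_2$, in which case $p/q=0$ and one recovers that a single line is never a HUP for an ellipse) are covered by the same argument.

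The step I expect to be the main obstacle is the pushforward computation of the second paragraph: one has to make the change of variables rigorous for a merely $L^1$ density, keep careful track of the two inverse branches of the folding map $g_\theta$ and of the fact that they enter with equal Jacobians (so that no cancellation is lost), and then conclude that the density of $(g_\theta)_\ast(\psi\,\d t)$ on $(-\rho_\theta,\rho_\theta)$ vanishes precisely when $\psi+\psi\circ\sigma_\theta=0$ a.e. A secondary technical point is to choose the support of $\psi_0$ in the rational case so that the $2q$ arcs $R^jA$ and $\sigma_1R^jA$ $(0\le j<q)$ are pairwise disjoint, which is what guarantees $\psi\not\equiv0$; with these in hand the ergodic-theoretic dichotomy itself is routine.
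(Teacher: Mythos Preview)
The paper does not prove this statement; it is quoted from \cite{JK,Le,Sj} and used as a black box in Case~3 of the proof of Theorem~\ref{thm-3}. Your argument is correct and is essentially the proof given in those references (particularly \cite{JK}, which extends the circle case of \cite{Le,Sj} to the ellipse): the parametrization $x(t)=(a\cos t,b\sin t)$, the reduction of $\widehat{\mu}|_{l_\theta}=0$ to the reflection identity $\psi+\psi\circ\sigma_\theta=0$ via the two-to-one fold $t\mapsto\rho_\theta\cos(t-\phi_\theta)$, the observation that $\sigma_{\theta_1}\circ\sigma_{\theta_2}$ is the rotation by $2(\phi_{\theta_1}-\phi_{\theta_2})$, and the irrational--rational dichotomy for that rotation are exactly the ingredients in Jaming--Kellay's dynamical approach. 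The identification $\phi_\theta=\arctan\big(\tfrac{b}{a}\tan\theta\big)$ is what produces the form \eqref{equ3.15}.
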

	
	As is shown in \emph{Case 2}, \eqref{equ3.12''} and \eqref{equ3.24} hold by using the rotation transform \eqref{equ3.17} with $\varphi$ satisfying  \eqref{equ3.18}, while in this case, there exists some $\varphi\in (0, \pi)\setminus \{\frac{\pi}{2}\}$  such that  $g(\varphi+\frac{\pi}{2})\ge g(\varphi)>0$, still denoted by $\varphi$ for convenience. Similarly, if we set $a^{-2}=g(\varphi)$, $b^{-2}=g(\varphi+\frac{\pi}{2})$, then we deduce that
	\begin{align*}
		S_x=\{(s',t')\in\R^2:\,\frac{s'^2}{a^2}+\frac{t'^2}{b^2}=1,\,a\ge b>0\},
	\end{align*}
moreover, by \eqref{equ3.24}, $a$ and $b$ satisfy
	\begin{equation}\label{equ3.19.1}	\frac{b^2}{a^2}=\frac{(A\cos\varphi+B\sin\varphi)^2-(B^2-AC)\sin^2\varphi}{(B\cos\varphi-A\sin\varphi)^2-(B^2-AC)\cos^2\varphi}=\frac{A+2B\tan\varphi+C\tan^2\varphi}{C-2B\tan\varphi+A\tan^2\varphi}.
	\end{equation}

	Finally, By  Theorem \ref{thm-3.1} and the fact \eqref{equ3.26}, we conclude that $(S_x,\R u_1\cup\R u_2)$ is a \emph{HUP} if and only if
	\begin{equation}\label{equ3.32}
		\arctan\left(\frac{b}{a}\tan(\theta_0-\varphi)\right)+\arctan\left(\frac{b}{a}\tan\varphi\right)\notin\pi\mathbb{Q},
	\end{equation}
where $\frac{b}{a}$ is given by \eqref{equ3.19.1}.
The proof of Theorem \ref{thm-3} is complete. \qed
	
	\begin{remark}\label{rmk3.2}
		Compared with  Theorem \ref{thm-2}, it seems natural to consider the following problem in higher dimensions: Let $\mu$ be a finite complex-valued measure supported in the cone $S$ (given by \eqref{equ3.8})  and assume that $\widehat{\mu}|_{\Lambda}=0$, where  $\Lambda\subset H_{u_1}\cup H_{u_2}$ is a subset, then  what's the minimum required information on the zero set $\Lambda$ such that $(S, \Lambda)$ forms a Heisenberg uniqueness pair?
		We mention that  the proof of Theorem \ref{thm-3} is based on the observation \eqref{equ3.10}, which may not hold if $\Lambda$ is only a subset of $H_{u_1}\cup H_{u_2}$. On the other hand, it's quite different from the two dimensional case since we don't have expressions like \eqref{equ2.1}-\eqref{equ2.2} and the tools there can't be applied directly.
	\end{remark}

	\noindent
	\section*{Acknowledgments}
	S. Huang was supported by the National Natural Science Foundation of China under grants 12171178 and 12171442.

	
\end{document}